\documentclass[11pt,reqno,a4paper]{amsart}

\usepackage[utf8]{inputenc}
\usepackage{cancel}
\usepackage[margin=0.9in]{geometry}
\usepackage[usenames]{color}
\usepackage{amsmath,amsthm,pdfsync,verbatim,graphicx,epstopdf,enumerate,amssymb}
\usepackage[colorlinks=true]{hyperref}
\usepackage{float}
\usepackage{cancel}
\usepackage[framemethod=tikz]{mdframed}
\usepackage{mathtools}
\usepackage{cite}
\usepackage{enumitem}
\mathtoolsset{showonlyrefs}

\hypersetup{allcolors=blue}
\numberwithin{equation}{section}
\newcommand{\I}{\mathrm{i}}

\newcommand{\lb}{\left(}

\newcommand{\rb}{\right)}

\newcommand{\Beq}{\begin{equation}}
	\newcommand{\Eeq}{\end{equation}}
\newcommand{\beq}{\begin{equation*}}
	\newcommand{\eeq}{\end{equation*}}
\newcommand{\bal}{\begin{align}}
	\newcommand{\eal}{\end{align}}

\newcommand{\B}{\beta}
\newcommand{\bp}{\begin{prob}}
	\newcommand{\ep}{\end{prob}}
\newcommand{\bpr}{\begin{proof}}
	\newcommand{\epr}{\end{proof}}

\newcommand{\bel}[1]{\begin{equation}\label{#1}}
	\newcommand{\ee}{\end{equation}}

\allowdisplaybreaks
\newtheorem{theorem}{Theorem}[section]

\newtheorem{lemma}[theorem]{Lemma}
\newtheorem{proposition}[theorem]{Proposition}

\theoremstyle{definition}

\newtheorem{remark}[theorem]{Remark}

\newcommand{\D}{\mathrm{d}}

\newcommand{\Rb}{\mathbb{R}}

\newcommand{\A}{\alpha}

\newcommand{\ve}{\varepsilon}

\newcommand{\Bb}{\mathbb{B}}
\usepackage[usenames]{color}
\usepackage{amsmath,pdfsync,verbatim,graphicx,epstopdf,enumerate}

\newcommand{\Sb}{\mathbb{S}}

\title[A Unified Range Characterization for SMT]{A Unified Range Characterization for the Spherical mean transform}

\author{Pradipta Chatterjee, Nisha Singhal and Abhilash Tushir}
\address {$^{\ast}$Centre for Applicable Mathematics, Tata Institute of Fundamental Research, Bangalore, India
\newline
E-mail:{\tt\ pradipta22@tifrbng.res.in, nisha2020@tifrbng.res.in, abhilash2296@gmail.com}
\newline
Orcid:{\tt\ 0009-0001-0728-7388, 0009-0006-3005-1986, 0009-0001-2870-0979}}

\begin{document}
\subjclass{33C55, 35R30, 44A12, 44A15, 444A20, 45Q05, 92C55}
\keywords{Spherical Radon transform; spherical harmonics; inversion; range characterization}
\begin{abstract}
In this article, we investigate the range characterization for the spherical mean transform (SMT) of functions supported in the unit ball. In earlier works, in the case of odd dimensions, a set of differential conditions was obtained, whereas in the case of even dimensions, integral conditions were obtained. We prove that these conditions that are different based on the parity of dimension are, in fact, equivalent in odd dimensions. This equivalence shows that the integral conditions yield a unified simple range characterization for the SMT that is valid in both even and odd dimensions.
\end{abstract}
\maketitle
\section{Introduction}
The spherical mean transform (SMT) of a function $f$ 
is defined as
\begin{equation}\label{SRTdef}
    \mathcal{R} f(p,t)=\frac{1}{\omega_{n-1}}\int\limits_{\mathbb{S}^{n-1}} f(p+t\theta) \mathrm{d} S (\theta), \quad (p,t)\in \mathbb{S}^{n-1}\times (0,2),
\end{equation}
where $\omega_{n-1}$ denotes the surface area of the unit sphere $\mathbb{S}^{n-1}$ and $\D S$ is the corresponding surface measure. The SMT is extensively studied due to its importance in both theory and applications. It naturally arises in the analysis of partial differential equations (PDEs) such as the wave and Euler–Poisson–Darboux equations, as well as in integral geometry and approximation theory; see \cite{CH_Book,John-book,Rhee,agranovsky1996approximation,agranovsky1996injectivity,AQ2, AQ3, Agranovsky:1999,And,aramyan2020recovering,Finch_2006} and the references therein. Motivated by applications in imaging, considerable attention has been focused on the SMT that integrates a function supported in the unit ball over spheres with centers restricted to the unit sphere.

In this setting, two problems have gained considerable interest, namely the characterization of the range of $\mathcal{R}$ and its closed-form inversion formula. Range characterizations are of significant importance in theoretical studies as well as in applications. In imaging applications, range descriptions are crucial to reduce numerical errors and noise, and to fill in incomplete measurements. In theory, these conditions are used in constructive proofs, such as producing a function in the range with support restrictions. The SMT shows up in the mathematical analysis of photoacoustic and thermoacoustic tomography, seismic imaging, and many other applications, which require simple and efficient inversion formulas to reconstruct the object under consideration.   

The first complete range characterization for the SMT in two dimensions was  obtained in \cite{ref:AmbKuch-range}, where the range characterization was described in terms of smoothness and support properties together with certain moment and vanishing conditions.
 Subsequently, several unified range characterizations were derived in terms of smoothness and support conditions along with certain orthogonality and vanishing conditions in all dimensions in a series of works; see \cite{Agranovsky-Finch-Kuchment-range,Agranovsky-Kuchment-Quinto,AN,Finch_2006} and the references therein. Most of these works exploited the connection between the SMT, and the wave and Darboux equations. A different characterization using half data was given in \cite{KK}. Recently, a much simpler range description was derived in odd dimensions in terms of a differential operator; see \cite{SRTrange_odd}, and in even dimensions in terms of an integral condition; see \cite{SRTrange_even}, along with the usual support and smoothness conditions.

There are numerous findings concerning the inversion of the SMT under various restrictions on the centres, radii, and accessible data. In the case of full data, several explicit and analytic reconstruction procedures have been established; see \cite{norton1980reconstruction,norton1981ultrasonic,xu2002time,Finch-P-R,Finch-Haltmeir-Rakesh_even-inversion,R,K}. Furthermore, the partial data problem, where only restricted measurements are available, has also attracted considerable attention due to its practical relevance in applications such as thermoacoustic and photoacoustic tomography. Some of the important contributions in this direction can be found in \cite{Ambartsoumian2015,Ambartsoumian2018,Ambartsoumian-Zarrad-Lewis,Souvik:2015,Salman,DoKuny}. In a recent article \cite{PC:2026}, the authors derived a more explicit and streamlined inversion formula in odd dimensions compared to earlier works, requiring only the solution of linear ODEs. Furthermore, the resulting inversion algorithm is computationally efficient, making it both practical and straightforward for numerical reconstruction.

In this work, we revisit the range characterization established in \cite{SRTrange_even}. Building on the approach and results developed in \cite{PC:2026} and \cite{SRTrange_odd}, we extend it from even dimensions to all dimensions. This provides a unified simple range condition that encompasses both even and odd dimensions and highlights the structural similarities between the two settings. The necessity of the condition was proven for arbitrary dimensions in \cite{SRTrange_even}. The main thrust of this work is the derivation of its sufficiency in odd dimensions, which is accomplished by establishing the equivalence of the differential condition in \cite{SRTrange_odd} and the integral condition in \cite{SRTrange_even}.
 The intermediate results obtained in the proof of our range description allow us to derive a more compact formulation of the inversion formula obtained in \cite{PC:2026}, which simplifies its representation and may facilitate further analytical and computational applications.

The rest of the paper is organized as follows. Section \ref{sec:prelim} contains some known results that are crucial to present our main results. Sections \ref{sec:mainresult} and \ref{sec:proofs} provide the statements and the proofs of the main results of this article.
\section{Preliminaries}\label{sec:prelim}
In this section, we recall some definitions and state some of the known range characterizations for the SMT in odd and even dimensions. 
\subsection{Spherical harmonics} The spherical harmonics expansion of $f\in C_{c}^{\infty}(\mathbb{B})$ is given by
\begin{equation*}
        f(x)=\sum_{m=0}^{\infty}\sum_{l=0}^{d_{m,n}} f_{m,l}(|x|)Y_{m,l}\left(\frac{x}{|x|}\right),
\end{equation*}
where
\begin{equation}\label{dqandfqs}
 \quad d_{0}=1,\quad d_{m,n}=\frac{(2m+n-2)(n+m-3)!}{m!(n-2)!},\quad \text{and}\quad f_{m,l}(r)=\int_{\mathbb{S}^{n-1}}f(r\theta)\overline{Y}_{m,l}(\theta)\D \theta.
\end{equation}
Similarly, the spherical harmonics expansion of $g=\mathcal{R}f$ is given by    
\begin{equation*}
     g(\theta,t)=\sum_{m=0}^{\infty}\sum_{l=0}^{d_{m,n}} g_{m,l}(t)Y_{m,l}(\theta),
\end{equation*}
where $g_{m,l}\in C_{c}^{\infty}((0,2))$.
\subsection{Egorychev technique}
In this work, we encounter several combinatorial quantities. Their analysis relies on a method introduced by  Egorychev in \cite{Egorychev}. For $n\geq k\geq 0$, the binomial coefficient $\binom{n}{k}$ can be represented using one of the following contour integrals:
\begin{align}
       \binom{n}{k}&=\frac{1}{2\pi i}\int_{|z|=\epsilon}\frac{(1+z)^{n}}{z^{k+1}}\D z\quad \text{with }\epsilon>0\\
       &=\frac{1}{2\pi i}\int_{|z|=\epsilon}\frac{1}{(1-z)^{k+1}z^{n-k+1}}\D z\quad \text{with }0<\epsilon<1.
\end{align}
A straightforward application of the Cauchy residue theorem also yields that $\binom{n}{k}$ vanishes if $n,k\geq 0$ with $n<k$ or if $k<0\leq n$. However, the case when $n$ is negative requires more careful treatment and  such cases will be carefully explained as and when it arises in the paper.
\subsection{Range characterization - odd dimensions}
The following range characterizations for the SMT in odd dimensions are recalled from \cite{SRTrange_odd}.
\begin{theorem}[Radial functions] Let $\mathbb{B}$ denote the unit ball in $\mathbb{R}^n$ for an odd $n \geq 3$, and $k:=(n-3) / 2$. A function $g \in C_c^{\infty}((0,2))$ is representable as $g=\mathcal{R} f$ for a radial function $f \in C_c^{\infty}(\mathbb{B})$ if and only if $h(t):=t^{n-2} g(t)$ satisfies
\begin{equation}\label{cond:odd}
    \left[\mathcal{L}_k h\right](1-t)=\left[\mathcal{L}_k h\right](1+t), \quad \text { for all } t \in[0,1]
\end{equation}
where $\mathcal{L}_k$ is the linear differential operator of order $k$ :
\begin{equation}\label{lkh:operator}
    \mathcal{L}_k=\sum_{l=0}^k \frac{(k+l)!}{(k-l)!l!2^l}(1-t)^{k-l} D^{k-l}, \quad D=\frac{1}{t} \frac{\mathrm{~d}}{\mathrm{~d} t}
\end{equation}
and $\left[\mathcal{L}_k h\right](\cdot)$ denotes evaluation of the function $\mathcal{L}_k h$ at the given point.
\end{theorem}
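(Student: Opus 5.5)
Both directions rest on an explicit one-dimensional formula for $\mathcal{R}f$ when $f$ is radial. Write $f(x)=F(|x|^2)$ with $F\in C_c^\infty([0,1))$. For $p\in\mathbb{S}^{n-1}$ we have $|p+t\theta|^2=1+t^2+2t\,p\cdot\theta$. Put $s=p\cdot\theta$ and use the standard slice formula for integrals over $\mathbb{S}^{n-1}$:
\begin{equation*}
g(t)=c_n\int_{-1}^{1}F(1+t^2+2ts)(1-s^2)^{\frac{n-3}{2}}\,\D s .
\end{equation*}
Now substitute $\rho=1+t^2+2ts$, which runs over $[(1-t)^2,(1+t)^2]$. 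Since $1-s^2=\big((1+t)^2-\rho\big)\big(\rho-(1-t)^2\big)/(4t^2)$, we get
\begin{equation*}
h(t)=t^{n-2}g(t)=c_n' \int_{(1-t)^2}^{(1+t)^2}F(\rho)\big[((1+t)^2-\rho)(\rho-(1-t)^2)\big]^{k}\,\D\rho ,
\end{equation*}
with $k=(n-3)/2$ a nonnegative integer. The weight is therefore a polynomial in $t$ and $\rho$, and this is where the odd parity of $n$ is used.

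\textbf{Necessity.} Set $a=(1-t)^2$ and $b=(1+t)^2$, and note that $D=\frac1t\frac{\D}{\D t}$ acts as $2\partial_b$ on $b$ and as $-2\,\frac{1-t}{t}\partial_a$ on $a$. I would first show by induction on $j$ that $D^{j}h$ is a sum of boundary terms and an interior integral. Because the weight vanishes to order $k$ at both endpoints, no boundary terms appear while $j\le k$. So $D^{j}h$ is an integral of $F$ against polynomials in $(b-\rho)$, $(\rho-a)$ and powers of $1/t$ and $(1-t)/t$. The coefficients $\frac{(k+l)!}{(k-l)!\,l!\,2^l}$ in \eqref{lkh:operator} are exactly the Bessel-polynomial coefficients, and they are designed to collapse $\mathcal{L}_k h$ to
\begin{equation*}
[\mathcal{L}_k h](t)=C\int_{(1-t)^2}^{(1+t)^2}F(\rho)\,P_k(\rho,t)\,\D\rho ,
\end{equation*}
with a kernel $P_k$ invariant under $t\mapsto 2-t$ after the appropriate change of variables. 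Concretely, I would verify the identity $[\mathcal{L}_kh](1-\tau)=[\mathcal{L}_kh](1+\tau)$ by computing both sides with the Egorychev contour representation of the binomial sums that appear. This symmetry check is the main computational obstacle. Before attempting the general case, I would test it for $k=0$ and $k=1$.

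\textbf{Sufficiency.} Given $g$ satisfying \eqref{cond:odd}, I would construct $F$ by solving the Volterra-type equation above for $F$. Differentiating $k+1$ times in $b$, with $a$ and $b$ treated as independent variables, reduces the kernel to a constant. The result is an Abel/Volterra equation of the second kind, with smooth kernel and data derived from $h$, which has a unique smooth solution $F$. Condition \eqref{cond:odd} is then exactly what makes the resulting $F$ consistent. Data $h$ are only given on the curve $(a,b)=((1-t)^2,(1+t)^2)$, and the symmetry ensures the solution does not depend on which branch ($t<1$ or $t>1$) is used to reconstruct $F$ on $[0,1]$. It also forces $F$ to vanish near $\rho=1$, using that $g$ vanishes near $t=2$. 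Finally I would check that $F\in C_c^\infty([0,1))$, so that $f\in C_c^\infty(\mathbb{B})$, and that $\mathcal{R}f=g$ by uniqueness of the Volterra solution.
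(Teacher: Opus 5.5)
\textbf{Status in the paper.} The paper does not prove this theorem. It quotes it from \cite{SRTrange_odd} and uses it as an input: its sufficiency half is what, through Theorem \ref{mtheorem}, makes \eqref{cond:even} sufficient in odd dimensions. Necessity could be recovered inside the paper from the necessity of \eqref{cond:even} in \cite{SRTrange_even}, combined with Proposition \ref{prop1} and Theorem \ref{mth}.

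\textbf{Necessity.} Taken on its own, your proposal leaves both essential steps undone. Your reduction $h(t)=c\int_{(1-t)^2}^{(1+t)^2}F(\rho)\,q_\rho(t)^k\,\D\rho$, with $q_\rho(t)=((1+t)^2-\rho)(\rho-(1-t)^2)$, is correct. So is your remark that at most $k$ derivatives produce no boundary terms. (One slip: $D$ acts on functions of $b=(1+t)^2$ as $2\frac{1+t}{t}\partial_b$, not as $2\partial_b$.) Set $K_k(\rho,t):=[\mathcal{L}_k q_\rho^k](t)$. Necessity then amounts exactly to the polynomial identity $K_k(\rho,1-\tau)=K_k(\rho,1+\tau)$. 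You also need the fact that both $[\mathcal{L}_kh](1\pm\tau)$ are integrals over the same interval $[\tau^2,1]$, which holds because $\mathrm{supp}\,F\subset[0,1)$ and $(2\pm\tau)^2\ge 1$. You assert the identity (the coefficients are ``designed to collapse'') and postpone it as ``the main computational obstacle''; nothing in the proposal proves it. The identity is true: for $k=1$ one gets $K_1(\rho,1+\sigma)=-\rho^2+4\rho+2\sigma^2\rho+4\sigma^2-\sigma^4$. But the general case is the whole combinatorial content of necessity, of the same Egorychev type as the proof of Theorem \ref{mth}.

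\textbf{Sufficiency: an invalid step.} Since $h$ depends on $t$ alone, it is known only along the curve $\sqrt a+\sqrt b=2$ (for $t<1$) or $\sqrt b-\sqrt a=2$ (for $t>1$). So you cannot differentiate in $b$ with $a$ held fixed. Moreover, since $F=0$ on $[1,\infty)$ and $b\ge 1$, the integral depends on $b$ only through the factor $(b-\rho)^k$, so $\partial_b^{k+1}$ annihilates it.

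\textbf{Sufficiency: the missing difficulty.} Done correctly in a single variable, the equation is \emph{not} a second-kind Volterra equation with smooth kernel on $[0,1)$. It degenerates at $t=1$, i.e.\ at $\rho=0$, the centre of the ball. In the variable $a=(1-t)^2$ the kernel and data involve $\sqrt a$; in the variable $\tau=1-t$ the coefficient of $F(\tau^2)$ vanishes at $\tau=0$. Already for $k=0$ you get $F(\tau^2)=h'(1-\tau)/(2c\tau)$. This is smooth in $\rho=\tau^2$ only if $h^{(2j+1)}(1)=0$ for all $j$, and for generic half-data $f\sim|x|^{-1}$ at the origin. This is consistent with Theorem \ref{mainthm1} recovering $f$ only on annuli $\mathbb{B}(\epsilon,1)$. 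So the real content of sufficiency is that \eqref{cond:odd} forces the reconstructed $F$ to be smooth up to $\rho=0$, so that $f(x)=F(|x|^2)\in C_c^\infty(\mathbb{B})$. Your claim of a ``unique smooth solution'' skips exactly this, and you never say how the hypothesis enters.

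\textbf{How the hypothesis could enter.} Define $\psi(a):=[\mathcal{L}_kh](1-\sqrt a)=c\int_a^1F(\rho)K_k(\rho,1-\sqrt a)\,\D\rho$. It is smooth on $[0,1)$ precisely because \eqref{cond:odd} makes $\sigma\mapsto[\mathcal{L}_kh](1+\sigma)$ even. The equation is still degenerate at $a=0$. For $k=1$ the diagonal value of the kernel is $8a$, and differentiating twice gives $8aF'+12F=-\psi''/c-2\int_a^1F$, whose homogeneous part admits $a^{-3/2}$. So one must show that smoothness of $\psi$ rules out such singular components.

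\textbf{The half $t\in(1,2)$.} Once regularity at the centre is established, this half needs no branch comparison. Let $\tilde h$ be the data generated by $F$, so $\tilde h=h$ on $(0,1)$. Necessity gives $\mathcal{L}_k(h-\tilde h)=0$ on $(1,2)$, and $h-\tilde h$ vanishes near $t=2$. The leading coefficient $(1-t)^kt^{-k}$ of $\mathcal{L}_k$ is nonzero on $(1,2)$, so ODE uniqueness gives $h=\tilde h$.
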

\begin{theorem}[General functions]\label{generalrange}
     Let $\Bb$ denote the unit ball in $\Rb^n$ for an odd
$n\geq 3$, and $k := \frac{n-3}{2}$. A function $g\in C_c^{\infty}(\Sb^{n-1}\times (0, 2))$ is representable as $g= \mathcal{R}f$ for $f\in C_c^{\infty}(\Bb)$
if and only if for each $(m, l), m\geq 0, 0 \leq l \leq d_{m,n}, h_{m,l}(t) = t^{n-2}g_{m,l}(t)$ satisfies the following two
conditions:
\begin{itemize}
\item there is a function $\phi_{m,l}\in C_c^{\infty}((0,2))$ such that
\begin{equation}
    h_{m,l}(t) =D^m\phi_{m,l}(t),
\end{equation}
\item the function $\phi_{m,l}(t)$ satisfies
\begin{equation}\label{rangeodd:gencase}
    \left[\mathcal{L}_{m+k} \phi_{m,l}\right](1-t)=\left[\mathcal{L}_{m+k}\phi_{m,l}\right](1+t)\quad \text{for all }t\in[0,1].
\end{equation}
\end{itemize}
\end{theorem}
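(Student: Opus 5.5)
This statement is recalled from \cite{SRTrange_odd}, so in the paper it is taken as known. If I had to prove it, I would reduce to the radial theorem above by working one spherical harmonic at a time.

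\emph{Step 1: decoupling the harmonics.} Expand $f$ and $g=\mathcal{R}f$ in spherical harmonics. By rotation invariance of $\mathcal{R}$ and the Funk--Hecke theorem, $g_{m,l}$ depends only on $f_{m,l}$, through a one-dimensional kernel $K_m(r,t)$ involving the Gegenbauer polynomial $C^{(n-2)/2}_m$. Explicitly, for $|p|=1$, write $|p+t\theta|^2=1+t^2+2t\,p\cdot\theta$. Then change variables from $\theta$ to $r=|p+t\theta|$, so that $r\,\D r = t\,\D(p\cdot\theta)$. This gives
\begin{equation*}
t^{n-2}g_{m,l}(t)=c_n\int_{|1-t|}^{1+t} r\,f_{m,l}(r)\,C_m\!\left(\tfrac{r^2-1-t^2}{2t}\right)\bigl(\ldots\bigr)^{(n-3)/2}\,\D r .
\end{equation*}

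\emph{Step 2: extracting the factor $D^m$.} I would use a Rodrigues-type identity to write the Gegenbauer factor, with its weight, as $D^m$ applied to the weight at parameter $(n-3)/2+m$. This produces $h_{m,l}=D^m\phi_{m,l}$, where $\phi_{m,l}$ is, up to constants, the radial transform in the odd ``dimension'' $n+2m$ applied to $r^{-m}f_{m,l}(r)$. The relevant index is $k'=(n+2m-3)/2=m+k$, which is exactly the index in \eqref{rangeodd:gencase}.

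\emph{Step 3: necessity.} Compact support of $f$ inside $\Bb$ gives $\phi_{m,l}\in C_c^\infty((0,2))$. The radial theorem, applied in dimension $n+2m$, then yields the symmetry \eqref{rangeodd:gencase}.

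\emph{Step 4: sufficiency.} Given $\phi_{m,l}$ satisfying \eqref{rangeodd:gencase}, the radial theorem in dimension $n+2m$ produces a radial $F_m\in C_c^\infty$ of the unit ball. Set $f_{m,l}(r)=r^mF_m(r)$. Step 2 then shows that $\mathcal{R}(f_{m,l}Y_{m,l})=g_{m,l}Y_{m,l}$.

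\emph{Step 5: summing the series.} The remaining task, and the main obstacle, is to show that $f=\sum f_{m,l}Y_{m,l}$ converges to a $C_c^\infty(\Bb)$ function. For this I need bounds on $f_{m,l}$ that are uniform in the parameters and give rapid decay in $m$. These must come from the explicit inversion formula for the radial theorem, which shows that $f_{m,l}$ is controlled by finitely many derivatives of $g_{m,l}$ with constants of polynomial growth in $m$. Since $g$ is smooth, the $g_{m,l}$ decay faster than any power of $m$, and this gives smoothness of $f$.

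The support condition, namely vanishing near $|x|=1$, follows uniformly because every $F_m$ is supported in the same ball. That common radius is determined by the support of $g$ in $(0,2)$.
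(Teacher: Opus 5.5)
The paper does not prove this theorem. It is quoted from \cite{SRTrange_odd}, so there is no internal argument to compare against. Your Steps 1--4 (Funk--Hecke, then Rodrigues, then the radial theorem in dimension $n+2m$ with index $m+k$) are the natural reduction, and they match how the paper itself uses $\phi_{m,l}$ in \eqref{phiqs}.

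\textbf{A correction in Step 1.} Funk--Hecke must be applied to the direction $\omega=x/|x|$ of $x=p+t\theta$ on the $(n-2)$-sphere $\{|x|=r,\ |x-p|=t\}$, not to $\theta$. So the Gegenbauer argument is $\cos\psi=\frac{1+r^2-t^2}{2r}$, not $p\cdot\theta=\frac{r^2-1-t^2}{2t}$. A direct computation with $n=3$, $m=1$, $p=e_3$, $f=f_1(|x|)x_3/|x|$ confirms this. The weight $Q(t,r)^{k}$ should then be read as $(2r)^{2k}(1-\cos^2\psi)^{k}$.

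This is exactly what makes Step 2 work. At fixed $r$ one has $D=-\frac{1}{r}\frac{\D}{\D(\cos\psi)}$, so Rodrigues' formula gives $h_{m,l}=c\,D^m\int r^{1-m}f_{m,l}(r)\,Q(t,r)^{m+k}\,\D r$. The boundary terms vanish because $Q$ vanishes at $r=|1-t|$ and $r=1+t$. With your argument, $D$ is not a $t$-independent multiple of $\frac{\D}{\D x}$, and the identity fails.

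Step 3 also silently uses that $f_{m,l}(r)=r^m\psi_{m,l}(r^2)$ with $\psi_{m,l}$ smooth. This is what makes $r^{-m}f_{m,l}$ a smooth radial function on the unit ball of $\Rb^{n+2m}$.

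\textbf{The genuine gap is Step 5.} This is where sufficiency for non-radial $g$ actually lives, and the mechanism you invoke does not deliver it. The explicit radial inversion (Theorem \ref{mainthm2}, i.e.\ the ODE in dimension $n+2m$) does not bound $f_{m,l}$ by a fixed number of derivatives of $g_{m,l}$:
\begin{itemize}
\item it applies $\frac{\D}{\D t}D^{m+2k}$ to $h_{m,l}$, a number of derivatives that grows with $m$;
\item it then requires solving an ODE of order $m+k$ whose coefficients in $\mathcal{L}_{m+k}$ are of factorial size;
\item it recovers $f_{m,l}$ only on annuli $\epsilon<|x|<1$, so it gives no uniform control at the origin, which is where smoothness of $\sum f_{m,l}Y_{m,l}$ is most delicate.
\end{itemize}
Smoothness of $g$ gives rapid decay in $m$ only for each fixed number of $t$-derivatives. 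An estimate that uses about $m$ derivatives therefore yields nothing. Polynomial-in-$m$ constants would need a genuine uniform estimate, which you have not supplied.

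\textbf{A way to close the gap.} Avoid summing altogether.
\begin{itemize}
\item Take a rotation-equivariant inversion formula $f=\mathcal{I}\mathcal{R}f$ valid for all $f\in C_c^\infty(\Bb)$ in odd dimensions, e.g.\ the backprojection formulas of \cite{Finch-P-R}.
\item Define $f:=\mathcal{I}g$. Since $|x-p|\in[1-|x|,1+|x|]\subset(0,2)$ for $x\in\Bb$, this $f$ is smooth in $\Bb$.
\item Since $\mathcal{I}$ commutes with rotations, it commutes with the projection onto degree-$m$ harmonics. So the degree-$m$ part of $f$ is $\sum_l\mathcal{I}(g_{m,l}Y_{m,l})=\sum_l\mathcal{I}\mathcal{R}(f_{m,l}Y_{m,l})=\sum_l f_{m,l}Y_{m,l}$, with your Step 4 functions.
\item If $\mathrm{supp}\,g\subset\Sb^{n-1}\times[a,b]$, then $f$ vanishes for $|x|>1-a$, and $\mathcal{R}f=g$ follows by comparing harmonic components.
\end{itemize}
Alternatively, a global Sobolev stability estimate for $\mathcal{R}$ on functions supported in a fixed smaller ball, applied to each $f_{m,l}Y_{m,l}$, gives exactly the uniform bounds your Step 5 needs.
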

\subsection{Range characterization - even dimensions}
The following range characterizations for the SMT in even dimensions are recalled from \cite{SRTrange_even}.
\begin{theorem}[Radial functions]\label{SRTrangeeven} Let $\mathbb{B}$ denote the unit ball in $\mathbb{R}^n$ for even $n \geq 2$ and $\alpha=\frac{n-2}{2}$. A function $g \in C_c^{\infty}((0,2))$ is representable as $g=\mathcal{R} f$ for a radial function $f \in C_c^{\infty}(\mathbb{B})$ if and only
if $h(t):=t^{n-2} g(t)$ satisfies for each $0<t<1$,
\begin{multline}\label{cond:even}
    \int_0^{1-t} \frac{u h(u)}{u^{2 \alpha}}\left\{\left[(1+u)^2-t^2\right]\left[(1-u)^2-t^2\right]\right\}^{\alpha-\frac{1}{2}} \mathrm{~d} u=\\\int_{1+t}^2 \frac{u h(u)}{u^{2 \alpha}}\left\{\left[(1+u)^2-t^2\right]\left[(1-u)^2-t^2\right]\right\}^{\alpha-\frac{1}{2}} \mathrm{~d} u.
\end{multline}
\end{theorem}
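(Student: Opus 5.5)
Throughout I write $f(x)=F(|x|)$ with $F\in C_c^\infty([0,1))$. The plan is to reduce both directions to one kernel identity, obtained by interchanging the order of integration in \eqref{cond:even}.

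\textbf{Step 1: an explicit formula for $h$.} For $|p|=1$, parametrize the sphere $\{p+t\theta\}$ by $r=|p+t\theta|$, so that $r^2=1+t^2+2t\,p\cdot\theta$. The standard reduction of a surface integral on $\Sb^{n-1}$ to an integral in the single variable $p\cdot\theta$ then gives
\begin{equation*}
h(t)=t^{n-2}g(t)=c_n\, t^{-(n-2)}\int_{|1-t|}^{1+t} F(r)\, r\,\Big\{\big[(1+t)^2-r^2\big]\big[r^2-(1-t)^2\big]\Big\}^{\frac{n-3}{2}}\D r ,
\end{equation*}
up to the exact power of $t$, which I will fix in the computation. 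The expression in braces is $16$ times the squared area of the triangle with sides $1,t,r$ (Heron's formula). The factor $\{[(1+u)^2-t^2][(1-u)^2-t^2]\}^{\alpha-\frac12}$ in \eqref{cond:even} is of the same Heron type, for the triangle with sides $1,u,t$, and this structural match is what makes the argument work.

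\textbf{Step 2: necessity.} Substitute the formula for $h$ into the difference $\mathrm{LHS}-\mathrm{RHS}$ of \eqref{cond:even} and apply Fubini. This gives
\begin{equation*}
\mathrm{LHS}-\mathrm{RHS}=\int_0^{3} F(r)\,r\,K(r,t)\,\D r ,
\end{equation*}
where $K(r,t)$ is the integral over $u\in(0,1-t)$ minus the integral over $u\in(1+t,2)$ of
\begin{equation*}
u^{1-2\alpha}\cdot u^{-(n-2)}\,\big[\mathrm{Heron}(1,u,r)\big]^{\alpha-\frac12}\,\big[\mathrm{Heron}(1,u,t)\big]^{\alpha-\frac12},
\end{equation*}
restricted to those $u$ with $|1-u|<r<1+u$. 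Since $\alpha$ is an integer, the exponent $\alpha-\frac12$ is a half-integer, so the integrand is $\sqrt{P(u)}$ times a rational function, with $P$ a polynomial whose real roots are $\pm1\pm t$ and $\pm1\pm r$.

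The key claim is that $K(r,t)=0$ whenever $r<1$. To prove it, I would view $\sqrt{P}$ on the two-sheeted cut plane, using $\Sb^1$-type cuts, and integrate around a contour encircling the two intervals on which the square root is real. After a change of variables (for example $u\mapsto$ an appropriate inversion), the contributions from $(0,1-t)$ and $(1+t,2)$ cancel, and the remaining pieces are residues at $u=0$ and $u=\infty$, which vanish by the degree count. If this contour argument proves unwieldy, the fallback is to expand the half-integer powers and compare the two integrals term by term using Egorychev's contour representation of the binomial coefficients.

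\textbf{Step 3: sufficiency.} Given $h$, I would first produce a radial $F$ supported in a larger ball, say $|x|<3$, with $\mathcal R f=g$. This should be possible because the map in Step 1 becomes an Abel-type Volterra operator after the substitution $s=r^2$, $\tau=t^2$, and such operators are invertible on smooth compactly supported data. Running Step 2 backwards, condition \eqref{cond:even} together with $K=0$ for $r<1$ leaves
\begin{equation*}
\int_1^{3}F(r)\,r\,K(r,t)\,\D r=0 \quad\text{for all } 0<t<1 .
\end{equation*}
I then expect $K(r,t)$, for $r>1$, to be supported in the triangle $r>1+t$, in the relevant variables, and to have a nonvanishing leading singular behaviour on the diagonal. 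If so, this is a first-kind Volterra equation with an Abel-type kernel, and Titchmarsh-type uniqueness forces $F=0$ on $r>1$, so $f\in C_c^\infty(\Bb)$.

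\textbf{Main obstacle.} The hard part is the analysis of $K$: proving the exact vanishing for $r<1$, and proving that for $r>1$ the kernel has the triangular support and the nondegenerate diagonal behaviour needed for the uniqueness argument. I would attack the vanishing first via the contour argument on the elliptic-type curve, since everything else follows from it once it is established.
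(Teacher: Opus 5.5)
The paper does not prove this theorem; it quotes it from \cite{SRTrange_even}. So I judge your plan on its own terms, and it has genuine gaps in both directions.

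\textbf{Necessity.} Reducing necessity, via Fubini, to $K(r,t)=0$ for $0<r<1$ is sound, and that vanishing is true. The reason you give for it, however, fails for $n\ge 4$. First fix the normalization: $h(t)=c_n\int_{|1-t|}^{1+t}F(r)\,r\,Q(t,r)^{\alpha-\frac12}\,\mathrm{d}r$ with $Q(t,u)=((1+t)^2-u^2)(u^2-(1-t)^2)$. There is no extra power of $t$; the factor $t^{-(n-2)}$ belongs to $g$. The braces in \eqref{cond:even} equal $-Q(t,u)$. The integrand of $K$ is then $u^{1-2\alpha}Q(u,r)^{\alpha-\frac12}(-Q(t,u))^{\alpha-\frac12}$, and for $\alpha\ge1$ it has genuine poles at $u=0$ and $u=\infty$ with nonzero residues.

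Take $n=4$ and $v=u^2$. The differential becomes $\frac{1}{2v}\sqrt{-P(v)}\,\mathrm{d}v$ with $P(v)=(v-(1-r)^2)(v-(1+r)^2)(v-(1-t)^2)(v-(1+t)^2)$. On the plane cut along the two intervals, the residues at $0$ and $\infty$ are $\frac{\epsilon i}{2}|1-r^2|(1-t^2)$ and $-\frac{\epsilon i}{2}(1-r^2)(1-t^2)$, with $\epsilon=\pm1$.
\begin{itemize}
\item For $r<1$ they cancel exactly.
\item For $1<r<2-t$ they add, so the integrals over the two full intervals differ by a nonzero multiple of $(r^2-1)(1-t^2)$.
\end{itemize}
A degree count cannot tell $r<1$ from $r>1$, so it cannot be the mechanism. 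What is missing is the evaluation of both residues for general integer $\alpha$, together with a proof that they cancel precisely when $r<1$. That identity is the real content of necessity. Only for $n=2$, where the differential is holomorphic, does your argument work as written; it is then the classical equality of the two real periods.

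\textbf{Sufficiency.} Step 3 fails more fundamentally.
\begin{itemize}
\item The map $F\mapsto h|_{(0,2)}$ on radial functions in a larger ball is not of Volterra type. Here $h(t)$ sees $F$ on $[|1-t|,1+t]$, on both sides of $r=1$.
\item This map has an infinite-dimensional kernel. Prescribe $F$ freely, compactly supported in $(\frac12,1)$, then solve successive Abel equations on $(1,2)$ and $(2,3)$ so that $h\equiv0$ on $(0,2)$. So preimages are far from unique, and no uniqueness theorem can force an arbitrary preimage to vanish for $r>1$. For the natural Volterra preimage supported in $r>1$, your conclusion would even give $g=0$.
\item The expected support of $K$ is wrong. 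For $1<r<\min(1+t,2-t)$, the left piece is the full integral over $(r-1,1-t)$, but the right piece is truncated at $u=2<1+r$. For $n=2$, by the equality of periods, $K(r,t)=\frac12\int_4^{(1+r)^2}\mathrm{d}v/\sqrt{|P(v)|}>0$.
\item The equations are indexed by $t\in(0,1)$, while the unknown lives on $(1,3)$.
\end{itemize}

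A workable route fixes the preimage first. Solve the interior Abel equation $h(t)=c_n\int_{1-t}^{1}F(r)\,r\,Q(t,r)^{\alpha-\frac12}\,\mathrm{d}r$, $0<t<1$, for $F$ on $(0,1)$. Let $\tilde h$ be $h$ minus $t^{n-2}$ times the spherical mean of the radial function with profile $F\mathbf{1}_{[0,1)}$. Then $\tilde h$ vanishes on $(0,1)$ and, by necessity, satisfies \eqref{cond:even}. The left side drops out, leaving $\int_{1+t}^{2}u^{1-2\alpha}\tilde h(u)\{\cdots\}^{\alpha-\frac12}\,\mathrm{d}u=0$ for $0<t<1$. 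This is a first-kind Abel equation in $s=1+t$ with kernel $\sim(u-s)^{\alpha-\frac12}$ near the diagonal, so $\tilde h=0$. Smoothness of $F$ at $r=0$ still needs a separate check.
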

\begin{theorem}[General functions]\label{SRTrangeeven:general} Let $\mathbb{B}$ denote the unit ball in $\mathbb{R}^{n}$ for an even $n\geq 2$.
A function $g\in C_c^{\infty}(\mathbb{S}^{n-1}\times(0,2))$
 is representable as $g=\mathcal{R}f$ for $f\in C^{\infty}(\mathbb{B})$ if and only if for each
 $(m, l), m\geq 0, 0 \leq l \leq d_{m,n}, h_{m,l}(t) = t^{n-2}g_{m,l}(t)$ 
satisfies the following two conditions:
\begin{itemize}
    \item there is a function $\phi_{m,l}\in C_c^{\infty}((0,2))$ such that
\begin{equation}
    h_{m,l}(t) =D^m\phi_{m,l}(t),
\end{equation}
\item the function $\phi_{m,l}(t)$ satisfies the following for $0<t<1$:
\begin{multline}\label{rangecond:even:general}
     \int_0^{1-t} \frac{u \phi_{m,l}(u)}{u^{2 (m+\alpha)}}\left\{\left[(1+u)^2-t^2\right]\left[(1-u)^2-t^2\right]\right\}^{m+\alpha-\frac{1}{2}} \mathrm{d} u=\\\int_{1+t}^2 \frac{u \phi_{m,l}(u)}{u^{2 (m+\alpha)}}\left\{\left[(1+u)^2-t^2\right]\left[(1-u)^2-t^2\right]\right\}^{m+\alpha-\frac{1}{2}} \mathrm{d} u.
\end{multline}
\end{itemize}
\end{theorem}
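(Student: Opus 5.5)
The plan is to reduce the statement, one spherical harmonic at a time, to the radial case (Theorem \ref{SRTrangeeven}) in the higher even dimension $N=n+2m$. For this dimension the radial parameter is $\frac{N-2}{2}=m+\alpha$, and this is exactly the exponent appearing in \eqref{rangecond:even:general}.

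\textbf{Step 1: the harmonic components of $\mathcal{R}f$.} Write $f=\sum f_{m,l}(|x|)Y_{m,l}(x/|x|)$ and compute $g_{m,l}$ with the Funk--Hecke theorem. For $p\in\Sb^{n-1}$, points $p+t\theta$ with $|p+t\theta|=r$ satisfy $\langle p,\omega\rangle=s:=\frac{1+r^2-t^2}{2r}$, where $\omega=(p+t\theta)/r$. After the change of variables one obtains
\begin{equation*}
h_{m,l}(t)=t^{n-2}g_{m,l}(t)=c_{n}\int_{|1-t|}^{1+t} f_{m,l}(r)\,r^{n-2}\,t\,\frac{C_m^{\alpha}(s)}{C_m^{\alpha}(1)}\,(1-s^2)^{\alpha-\frac12}\,\frac{\D r}{r}
\end{equation*}
(up to a harmless constant and power bookkeeping), where $C^{\alpha}_m$ is the Gegenbauer polynomial.

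\textbf{Step 2: Rodrigues' formula produces $D^m$.} By Rodrigues' formula, $C_m^{\alpha}(s)(1-s^2)^{\alpha-\frac12}$ is a constant multiple of $\frac{\D^m}{\D s^m}(1-s^2)^{m+\alpha-\frac12}$. At fixed $r$ we have $\partial s/\partial t=-t/r$, so $\frac{\D}{\D s}=-r\,D$ with $D=\frac1t\frac{\D}{\D t}$. The boundary terms vanish because $(1-s^2)^{m+\alpha-\frac12}$ vanishes to high order at $s=\pm1$, that is, at $r=|1\pm t|$. Hence $D^m$ can be pulled outside the integral, giving $h_{m,l}=D^m\phi_{m,l}$ with
\begin{equation*}
\phi_{m,l}(t)=c_{n,m}\,t\int f_{m,l}(r)\,r^{n-2+m}\,(1-s^2)^{m+\alpha-\frac12}\,\frac{\D r}{r}.
\end{equation*}
This is, up to a constant, $t^{N-2}$ times the SMT in $\Rb^{N}$ of the radial function $F(r)=r^{-m}f_{m,l}(r)$. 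Indeed, the radial SMT in dimension $N$ has the same structure with $\alpha$ replaced by $m+\alpha$. Since $f_{m,l}(r)=O(r^m)$ is smooth in the appropriate sense, $F(|x|)$ lies in $C_c^\infty$ of the unit ball of $\Rb^{N}$, and $\phi_{m,l}\in C_c^\infty((0,2))$.

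\textbf{Step 3: necessity.} Apply the necessity half of Theorem \ref{SRTrangeeven} in dimension $N$ to $\phi_{m,l}$. After substituting $u$ for the radius variable, this is exactly \eqref{rangecond:even:general}.

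\textbf{Step 4: sufficiency.} Given $\phi_{m,l}$ satisfying \eqref{rangecond:even:general}, the sufficiency half of Theorem \ref{SRTrangeeven} in dimension $N$ yields a radial $F_{m,l}\in C_c^\infty$ with $\phi_{m,l}$ equal to its (weighted) SMT. Set $f_{m,l}(r)=r^mF_{m,l}(r)$; then Steps 1--2 read backwards give $h_{m,l}=D^m\phi_{m,l}=t^{n-2}g_{m,l}$. Injectivity of the SMT in this setting shows these $f_{m,l}$ are uniquely determined. It remains to check that $\sum f_{m,l}Y_{m,l}$ converges to a function in $C^\infty(\Bb)$ whose transform is $g$.

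\textbf{Main obstacle.} I expect the hardest part to be this last summability and smoothness step, which requires bounds on $f_{m,l}$ that decay rapidly in $m$. I would first try to obtain them from an explicit inversion formula for the radial transform in dimension $N$, tracking the dependence of constants on $m$ and using the rapid decay of $g_{m,l}$ coming from $g\in C_c^\infty$. Failing that, I would use a density/closed-range argument: approximate $g$ by finite harmonic sums and show the corresponding preimages converge.
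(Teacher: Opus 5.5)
The paper gives no proof of this theorem; it quotes it from \cite{SRTrange_even}. The closest thing in the paper is Theorem \ref{mtheorem2}, which uses the same mechanism you propose: replace $k$ by $m+k$ (that is, $n$ by $N=n+2m$), with $h_{m,l}$ a multiple of $D^m\phi_{m,l}$ and $\phi_{m,l}$ a weighted radial transform of $u^{-m}f_{m,l}$ (cf.\ \eqref{phiqs}). Your Steps 1--3 therefore follow the standard route and are correct in substance, but two points need fixing.

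First, the factor $t/r$ in your displays must not be there. With $\sigma=\langle p,\theta\rangle$ one has $\D\sigma=\frac{r}{t}\,\D r$ and $(1-\sigma^2)^{\alpha-\frac12}=(r/t)^{2\alpha-1}(1-s^2)^{\alpha-\frac12}$, so the powers of $t$ cancel exactly and
\[
h_{m,l}(t)=c_n\int_{|1-t|}^{1+t} r^{n-2}f_{m,l}(r)\,\frac{C^{\alpha}_m(s)}{C^{\alpha}_m(1)}\,(1-s^2)^{\alpha-\frac12}\,\D r .
\]
This is not cosmetic: with a prefactor $t$ you could not pull $D^m$ out of the integral, since $D^m(t\psi)\neq tD^m\psi$. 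After the correction, Rodrigues' formula gives
\[
\phi_{m,l}(t)=c\int_{|1-t|}^{1+t}r^{1-m}f_{m,l}(r)\,Q(t,r)^{m+\alpha-\frac12}\,\D r,
\]
which is indeed a constant times $t^{N-2}$ times the radial transform in $\Rb^N$ of $r^{-m}f_{m,l}$.

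Second, smoothness of $|x|^{-m}f_{m,l}(|x|)$ on $\Rb^N$ does not follow from $f_{m,l}=O(r^m)$. You need $f_{m,l}(r)=r^m a(r^2)$ with $a\in C^\infty([0,\infty))$, which comes from the harmonic decomposition of the Taylor polynomials of $f$. Conversely, $F_{m,l}(r)=a(r^2)$ is what makes $r^mF_{m,l}(r)Y_{m,l}$ smooth on $\Rb^n$ in Step 4.

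\textbf{The genuine gap} is the one you flag yourself, and it is the entire analytic content of sufficiency beyond the radial theorem. Steps 1--4 show only that each single term $g_{m,l}Y_{m,l}$ lies in the range, whereas the theorem asserts that $g$ does. You must show that $\sum_{m,l}f_{m,l}Y_{m,l}$ converges in $C^\infty$ to a function supported in a fixed compact subset of $\Bb$. The per-harmonic reduction gives no control over this, and tracking $m$-dependence through the radial theory in dimension $n+2m$ is the hard way, since every constant moves with the dimension.

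The missing ingredient is a continuous left inverse of $\mathcal{R}$ on all of $C_c^\infty(\Bb)$, not built harmonic by harmonic. An example is the explicit even-dimensional formula of \cite{Finch-Haltmeir-Rakesh_even-inversion}: $f=\mathcal{T}\mathcal{R}f$, where $\mathcal{T}$ (derivatives in $t$, a logarithmic kernel, backprojection over $\Sb^{n-1}$, a Laplacian) maps $C_c^\infty(\Sb^{n-1}\times(0,2))$ continuously into $C^\infty$ on compact subsets of $\Bb$. With it, your second option closes as follows.
\begin{enumerate}
\item Suppose $\operatorname{supp}g\subset\Sb^{n-1}\times[\delta,2-\delta]$. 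Then each $\phi_{m,l}$ is supported in $[\delta,2-\delta]$, because on $(0,\delta)$ and on $(2-\delta,2)$ it is a polynomial in $t^2$ that vanishes near the endpoint.
\item The partial sums $g^{(M)}=\sum_{m\le M}\sum_l g_{m,l}Y_{m,l}$ equal $\mathcal{R}f^{(M)}$ with $f^{(M)}\in C_c^\infty(\Bb)$, by your construction.
\item The local uniqueness (support) theorem for spheres centred on $\Sb^{n-1}$ forces $f^{(M)}=0$ for $|x|>1-\delta$.
\item Since $g^{(M)}\to g$ in $C^\infty$, we get $f^{(M)}=\mathcal{T}g^{(M)}\to\mathcal{T}g=:f$ in $C^\infty$. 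Hence $f\in C_c^\infty(\Bb)$, and $\mathcal{R}f=g$ by continuity of $\mathcal{R}$.
\end{enumerate}
Alternatively, invoke a harmonic-wise range theorem whose sufficiency part already performs this summation, e.g.\ \cite{Agranovsky-Kuchment-Quinto,Agranovsky-Finch-Kuchment-range}. Each $g_{m,l}Y_{m,l}$, being in the range, satisfies its per-harmonic conditions, so $g$ satisfies all of them and is therefore in the range.
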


\section{Main results}\label{sec:mainresult}
This section is devoted to the presentation of the main results of the article.
The following two results extend the range characterization for SMT in even dimensions stated in Theorems \ref{SRTrangeeven} and \ref{SRTrangeeven:general} to arbitrary dimensions.
\begin{theorem}[Radial functions]\label{mtheorem}
    For odd $n\geq 3$, and $\alpha=\frac{n-2}{2}$, the conditions \eqref{cond:odd} and \eqref{cond:even} are equivalent. Hence,  the range characterization in Theorem \ref{SRTrangeeven} is valid in all dimensions.
\end{theorem}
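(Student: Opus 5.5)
For odd $n$ we have $\alpha=\frac{n-2}{2}=k+\frac12$ with $k=\frac{n-3}{2}$. The exponent $\alpha-\frac12=k$ in \eqref{cond:even} is then a nonnegative integer and $u^{2\alpha}=u^{n-2}$, so the kernel is polynomial. Factoring gives
\begin{equation*}
\left[(1+u)^2-t^2\right]\left[(1-u)^2-t^2\right]=\left[(1+t)^2-u^2\right]\left[(1-t)^2-u^2\right],
\end{equation*}
so \eqref{cond:even} reads $F(t)=0$ for $0<t<1$, where
\begin{equation*}
F(t):=\int_0^{1-t} u^{1-2\alpha}h(u)\,K(u,t)\,\D u-\int_{1+t}^{2} u^{1-2\alpha}h(u)\,K(u,t)\,\D u,\qquad K(u,t)=\left\{\left[(1+t)^2-u^2\right]\left[(1-t)^2-u^2\right]\right\}^{k}.
\end{equation*}
The plan is to show that a suitable linear differential operator $\mathcal{M}$ in $t$ converts $F$ into a nonzero multiple of $[\mathcal{L}_k h](1-t)-[\mathcal{L}_k h](1+t)$, and that $\mathcal{M}F=0$ forces $F=0$.

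\textbf{Step 1 (boundary behaviour).} The kernel $K$ vanishes to order exactly $k$ at both endpoints $u=1-t$ and $u=1+t$. Hence if we differentiate $F$ in $t$ (or, more naturally, in the variables $a=1-t$ and $b=1+t$, or in $a^2$, $b^2$), no boundary terms appear for the first $k$ derivatives. The $(k+1)$-st derivative produces boundary terms involving $h$ and its derivatives at $1\mp t$. I would pick $\mathcal{M}$ so that all interior integrals cancel and only boundary terms survive. A natural candidate is an operator of order $2k+1$ built from $\frac{1}{a}\frac{\D}{\D a}$ and $\frac{1}{b}\frac{\D}{\D b}$: since $(a^2-u^2)^k$ is annihilated by $\left(\frac{1}{a}\frac{\D}{\D a}\right)^{k+1}$, integrating by parts in $u$ should shift these $D$-type derivatives onto $h$. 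This explains why $D=\frac1t\frac{\D}{\D t}$ appears in $\mathcal{L}_k$.

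\textbf{Step 2 (matching with $\mathcal{L}_k$).} Next I would expand the surviving boundary terms. By the Leibniz rule, applying $D$-derivatives to the factor $\left[(1\pm t)^2-u^2\right]^k$ evaluated at $u=1\mp t$ produces powers of $(1-u)$ times factorial coefficients, in the form $\sum_l c_{k,l}(1-u)^{k-l}D^{k-l}h(u)$ at $u=1\mp t$. I would then verify $c_{k,l}=C\,\frac{(k+l)!}{(k-l)!\,l!\,2^l}$ using the Egorychev contour representation of the binomial coefficients to collapse the double sums. I expect this to be the main obstacle: one must choose $\mathcal{M}$ so that the boundary contributions at $1-t$ and $1+t$ come out with exactly opposite signs and the same operator $\mathcal{L}_k$. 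If a direct choice fails, I would first try working in the variable $s=t^2$, together with the symmetry $u\mapsto$ (reflection about $1$), and compare with the computations in \cite{SRTrange_odd,PC:2026}.

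\textbf{Step 3 (equivalence).} Since $h\in C_c^\infty((0,2))$, for $t$ near $1$ both integration ranges lie where $h$ vanishes, so $F\equiv0$ on a neighbourhood of $t=1$. If \eqref{cond:even} holds, then $\mathcal{M}F=0$, which gives \eqref{cond:odd}. Conversely, if \eqref{cond:odd} holds, then $\mathcal{M}F=0$ on $(0,1)$. Since $F$ and all its derivatives vanish at $t=1$, uniqueness for the linear ODE $\mathcal{M}F=0$ yields $F\equiv0$ on $(0,1)$; this requires $\mathcal{M}$ to have coefficients regular on $(0,1]$, which holds because $1\pm t\neq0$ there. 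This proves the equivalence. Combined with the necessity of \eqref{cond:even} in all dimensions from \cite{SRTrange_even} and the sufficiency of \eqref{cond:odd} in odd dimensions from \cite{SRTrange_odd}, it follows that Theorem \ref{SRTrangeeven} holds in all dimensions.
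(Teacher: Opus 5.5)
Your Step 3 is the same closing argument the paper uses, but the proposal has a genuine gap. Everything rests on an identity $\mathcal{M}F=c(t)\bigl([\mathcal{L}_kh](1-t)-[\mathcal{L}_kh](1+t)\bigr)$, and you neither fix $\mathcal{M}$ nor give an argument for it. In the paper's notation $F=(-1)^k(H^L_k-H^R_k)$.

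Your first candidate for $\mathcal{M}$ does not work as stated. $F$ depends on the single variable $t$, so you cannot differentiate independently in $a=1-t$ and $b=1+t$. An operator in $t$ such as $\frac{1}{1-t}\frac{\D}{\D t}$ acts on both factors of $K$, so it does not annihilate $\bigl((1-t)^2-u^2\bigr)^k$ on its own.

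The operator that works is essentially your fallback. Since $[(1+t)^2-u^2][(1-t)^2-u^2]=(1-t^2)^2-2u^2(1+t^2)+u^4$, the kernel $K$ is a polynomial of degree $2k$ in $t^2$. It is therefore killed by $\frac{\D}{\D t}D^{2k}$, with $D=\frac1t\frac{\D}{\D t}$ acting in $t$ (not in $u$). The $1+t$ half of the identity then comes for free from $t\mapsto -t$, not from a reflection in $u$. Indeed $H^L_k(-t)+H^R_k(t)=\int_0^2ug(u)[Q(t,u)]^k\,\D u$ is annihilated, so $[\frac{\D}{\D t}D^{2k}H^R_k](t)=[\frac{\D}{\D t}D^{2k}H^L_k](-t)$ (Proposition \ref{prop1}).

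What remains, and what you defer as ``the main obstacle'', is the real content of the theorem:
$[\frac{\D}{\D t}D^{2k}H^L_k](t)=\frac{(-1)^{k+1}k!\,4^k2^k}{t^{2k}}[\mathcal{L}_kh](1-t)$ (Theorem \ref{mth}). Note the factor $t^{-2k}$, so the multiple is not a constant.

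The $2k+1$ differentiations actually produce ordinary derivatives of $g=h/u^{2k+1}$ at $1-t$, with coefficients rational in $t$. Turning these into $t^{-2k}[\mathcal{L}_kh](1-t)$ is a nontrivial combinatorial identity. So asserting that the boundary terms ``come out in the form $\sum_l c_{k,l}(1-u)^{k-l}D^{k-l}h$'' restates the claim rather than proving it.

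The paper proves it in these steps:
\begin{enumerate}
\item It takes the expansion of $\frac{\D}{\D t}D^{2k}G_k$ in terms of $g^{(m)}(1-t)$ from \cite{PC:2026}.
\item It converts this to $h$ by the Leibniz rule and collapses one sum with \cite[Lemma 3.1]{SRTrange_odd}.
\item Separately, it rewrites $\mathcal{L}_k$ in ordinary derivatives via the formula for $D^r$.
\item It matches the coefficients of each $h^{(j)}(1-t)/(1-t)^{2k-j}$. For $j=0$ this is a contour computation giving $\frac{(2k)!}{k!}$. For $j\geq 1$ it uses the two Egorychev computations of Lemmas \ref{ckjr:lemma} and \ref{Bkjr:lemma}.
\end{enumerate}
Without this, or some other proof of the identity, your Steps 1--2 are a plan rather than a proof.

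A minor point: with the correct operator, the coefficients involve $1/t$. The regularity needed for ODE uniqueness in Step 3 therefore comes from $t\neq 0$ on $(0,1]$, not from $1\pm t\neq 0$.
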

\begin{theorem}[General functions]\label{mtheorem2}
    For odd $n\geq 3$, and $\alpha=\frac{n-2}{2}$, the conditions \eqref{rangeodd:gencase} and \eqref{rangecond:even:general} are equivalent. Hence, the range characterization in Theorem \ref{SRTrangeeven:general} is valid in all dimensions.
\end{theorem}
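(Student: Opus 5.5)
The plan is to reduce Theorem \ref{mtheorem2} to a single one-parameter statement that also contains Theorem \ref{mtheorem}. Put $k=\frac{n-3}{2}$, so that $\alpha=\frac{n-2}{2}=k+\frac12$. Then in \eqref{rangecond:even:general} the exponent is $m+\alpha-\frac12=m+k$ and the weight is $u^{-2(m+\alpha)}=u^{-(2(m+k)+1)}$. Hence \eqref{rangecond:even:general} for $\phi_{m,l}$ is exactly \eqref{cond:even} with $k$ replaced by $K:=m+k$ and $h$ replaced by $\psi:=\phi_{m,l}$. Likewise \eqref{rangeodd:gencase} is \eqref{cond:odd} with $\mathcal{L}_k$ replaced by $\mathcal{L}_K$. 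So it suffices to prove the following. For every integer $K\ge 0$ and every $\psi\in C_c^\infty((0,2))$, write
\begin{equation*}
P(u,s)=\big[(1+u)^2-s\big]\big[(1-u)^2-s\big]=(1+u^2-s)^2-4u^2,\qquad s=t^2,
\end{equation*}
and
\begin{equation*}
I(t)=\int_0^{1-t}\frac{\psi(u)}{u^{2K}}P(u,t^2)^K\,\D u-\int_{1+t}^2\frac{\psi(u)}{u^{2K}}P(u,t^2)^K\,\D u .
\end{equation*}
The claim is that $I\equiv 0$ on $(0,1)$ if and only if $[\mathcal{L}_K\psi](1-t)=[\mathcal{L}_K\psi](1+t)$ on $[0,1]$.

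The key observation is that $P^K$ is a polynomial in $s=t^2$. Moreover, $P$ vanishes simply at the moving endpoints $u=1\pm t$, so $P^K$ vanishes there to order $K$. I will differentiate $I$ repeatedly with the operator $\frac{1}{t}\frac{\D}{\D t}=2\frac{\D}{\D s}$.
\begin{itemize}
\item For the first $K$ derivatives, the boundary contributions from the limits $1\pm t$ vanish, because the integrand still carries a positive power of $P$ there.
\item Once all powers of $P$ have been consumed, the remaining derivatives produce only boundary terms. Evaluated at $u=1-t$ and $u=1+t$, these involve $\psi$ and its derivatives.
\end{itemize}
Equivalently, and more cleanly, I will integrate by parts in $u$ to move the $s$-derivatives onto $\psi$ through the operator $D=\frac1u\frac{\D}{\D u}$. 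The target identity is
\begin{equation*}
\Big(\frac{1}{t}\frac{\D}{\D t}\Big)^{K+1} I(t)=\frac{c_K}{t}\Big([\mathcal{L}_K\psi](1-t)-[\mathcal{L}_K\psi](1+t)\Big)
\end{equation*}
for an explicit nonzero constant $c_K$, possibly after a harmless rearrangement in $t$. Verifying that the coefficients produced by the Leibniz expansion of $\partial_s^j P^K$ at $u=1\pm t$ match the coefficients $\frac{(K+l)!}{(K-l)!\,l!\,2^l}(1-u)^{K-l}$ of $\mathcal{L}_K$ in \eqref{lkh:operator} is the main obstacle. It reduces to binomial-sum identities, which I would handle with the Egorychev contour representations recalled in Section \ref{sec:prelim}. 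Concretely, I would write each sum as a residue of a rational function in $z$ and collapse it by a change of variables, testing first on $K=1$ and $K=2$ to fix the normalization.

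Granting the identity, the two implications follow.
\begin{itemize}
\item If $I\equiv0$, the displayed identity gives \eqref{cond:odd} (with $K$) for $t\in(0,1)$. The endpoints $t=0,1$ follow by continuity.
\item Conversely, if \eqref{cond:odd} holds, then $I$ is a polynomial in $s=t^2$ of degree at most $K$ on $(0,1)$. To show it is zero, let $\operatorname{supp}\psi\subset[a,b]\subset(0,2)$. For $t>\max(1-a,\,b-1)$ we have $1-t<a$ and $1+t>b$, so both integrals vanish. Thus $I$ vanishes on a nontrivial interval near $t=1$, and being a polynomial it vanishes identically.
\end{itemize}
Specializing to $K=k$ gives Theorem \ref{mtheorem}. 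Taking $K=m+k$ for each $(m,l)$ gives Theorem \ref{mtheorem2}. The range statement then follows by combining with Theorem \ref{generalrange}, since the condition $h_{m,l}=D^m\phi_{m,l}$ is common to both characterizations.
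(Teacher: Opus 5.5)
Your reduction to a single parameter $K=m+k$ (via $\alpha=k+\tfrac12$, so the exponent is $K$ and the weight is $u^{-(2K+1)}$) is exactly the paper's reduction, and your two implications have the right shape. The gap is the central identity: as you state it, it is false for every $K\ge 1$. The step ``once all powers of $P$ have been consumed, the remaining derivatives produce only boundary terms'' overlooks that $P(u,s)=(1+u^2-s)^2-4u^2$ is \emph{quadratic} in $s$. After $K$ derivatives, $\partial_s^K(P^K)$ still contains the $P$-free part $K!\,(-2)^K(1+u^2-s)^K$, which has degree $K$ in $s$. So the following derivatives produce integral terms as well as boundary terms, and the integral terms disappear only after $2K+1$ applications of $\frac1t\frac{\D}{\D t}=2\partial_s$. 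Already for $K=1$ one gets
\[
\partial_s^2 I=\frac{2}{t}\Big[\frac{\psi(1-t)}{1-t}-\frac{\psi(1+t)}{1+t}\Big]+2\int_0^{1-t}\frac{\psi(u)}{u^2}\,\D u-2\int_{1+t}^2\frac{\psi(u)}{u^2}\,\D u .
\]
This is nonlocal and contains no $\psi'$, whereas $\mathcal{L}_1\psi=(1-t)D\psi+\psi$ does. The correct statement is what Theorem~\ref{mth} together with Proposition~\ref{prop1} gives (your $P^K$ equals $(-1)^KQ^K$):
\[
\Big(\frac1t\frac{\D}{\D t}\Big)^{2K+1}I(t)=-\frac{K!\,8^K}{t^{2K+1}}\Big([\mathcal{L}_K\psi](1-t)-[\mathcal{L}_K\psi](1+t)\Big).
\]
Accordingly, in the converse direction $I$ is a polynomial in $t^2$ of degree at most $2K$, not $K$. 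Once this is fixed, your vanishing-near-$t=1$ argument still closes that direction.

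More seriously, even with the order corrected, the proposal does not prove this identity. Matching the boundary coefficients with those of $\mathcal{L}_K$ in \eqref{lkh:operator} is the entire content of the theorem, and you only announce that Egorychev's method would handle it. In the paper this is the bulk of the work, done in two halves.

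First, Proposition~\ref{prop1} uses $H^L_k(-t)+H^R_k(t)=\int_0^2 ug(u)[Q(t,u)]^k\,\D u$, a polynomial in $t^2$. Together with the fact that $D$ commutes with $t\mapsto -t$, this reduces the endpoint $1+t$ to the endpoint $1-t$. It also trades $H^L_k$ for the integral $G_k$ over $[1-t,1]$, whose derivative $\frac{\D}{\D t}D^{2k}G_k$ is already known explicitly from \cite[Theorem 2.1]{PC:2026}. After a Leibniz conversion from $g$ to $h=t^{2k+1}g$, contour evaluations give $P_{k,0}=(2k)!/k!$ and the closed form of $C_{k,j,r}$ (Lemma~\ref{ckjr:lemma}).

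Second, and independently, $\mathcal{L}_k$ is rewritten in ordinary derivatives via \cite[Lemma 3.3]{PC:2026}. Lemma~\ref{Bkjr:lemma} then shows that its coefficients $B_{k,j,r}$ coincide with $C_{k,j,r}$.

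Your plan would need an equivalent computation, at both endpoints unless you also exploit the reflection $t\mapsto -t$. As written, it supplies neither the correct target identity nor the argument for it.
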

Further, we derive a more compact formulation of the inversion formulas derived in \cite[Theorem 2.1, Theorem 2.3]{PC:2026} as a consequence of the intermediate results in the proofs of Theorems \ref{mtheorem} and \ref{mtheorem2}.
\begin{theorem}[Explicit inversion for radial functions]\label{mainthm1} Let $\epsilon>0$, $n\geq 3$ be an odd integer, and $k:=\frac{n-3}{2}$. Also, assume that $f\in C_{c}^{\infty}(\mathbb{B})$ and $$h_{k}(t)=\dfrac{4^{k}\omega_{2k+3}t^{2k+1}}{\omega_{2k+2}}\mathcal{R}f(p,t),~~\forall~(p,t)\in \mathbb{S}^{n-1}\times (0,1).$$ Then $f$ can be recovered in the annular region $\mathbb{B}(\epsilon,1)$ by solving
\begin{equation}\label{ODE:op}
			\left[\frac{\D}{\D t}D^{2k}h_{k}\right](t)
			=\frac{(-1)^{k}k!2^{3k}}{t^{2k}} \left[\mathcal{L}_{k}(t^{2k+1}f)\right](1-t),
\end{equation}
with initial conditions $f^{(i)}(1-\epsilon^{\prime})=0$ for $0\leq i\leq k-1$, where $\epsilon^{\prime}>0$ is small enough such that $1-\epsilon^{\prime}$ lies outside the support of $f$.
\end{theorem}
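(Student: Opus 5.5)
The plan is to work directly with the radial representation of $\mathcal{R}f$. From it we derive an identity expressing $\frac{\D}{\D t}D^{2k}h_k$ as an explicit linear combination of $f,f',\dots,f^{(k)}$ evaluated at $1-t$. We then show, by a combinatorial identity, that this combination is exactly $\frac{(-1)^k k!2^{3k}}{t^{2k}}[\mathcal{L}_k(t^{2k+1}f)](1-t)$.

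\textbf{Step 1: integral representation.} For radial $f$ (writing $f(r)$ for its profile) and $0<t<1$, integrate over spheres centred at $p\in\mathbb{S}^{n-1}$ in polar coordinates around the origin. With $n=2k+3$ this gives
\begin{equation*}
h_k(t)=c_k\int_{1-t}^{1} r f(r)\,P(r,t)^{k}\,\D r,\qquad P(r,t)=4t^2-(1+t^2-r^2)^2=\bigl(r^2-(1-t)^2\bigr)\bigl((1+t)^2-r^2\bigr),
\end{equation*}
where $c_k$ is a constant. The normalisation $\frac{4^k\omega_{2k+3}}{\omega_{2k+2}}t^{2k+1}$ in the definition of $h_k$ is chosen precisely to absorb the Jacobian and the surface-area constants, and we will track $c_k$ so that it produces the factor $(-1)^k k!2^{3k}$. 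The upper limit is $1$ rather than $1+t$ because $f$ is supported in $\mathbb{B}$.

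\textbf{Step 2: differentiate.} In the variable $s=t^2$ we have $D=2\,\frac{\D}{\D s}$, and $P$ is a quadratic polynomial in $s$ with leading coefficient $-1$. Hence $P^k$ is a polynomial in $s$ of degree $2k$, and the $s$-derivative of order $2k$ applied to $P^k$ is the constant $(2k)!(-1)^k$.

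Apply $D$ repeatedly under the integral sign. The lower limit $1-t$ is a root of $P(\cdot,t)$, so for $j\le k-1$ every boundary term contains a factor $P^{k-j}$ that vanishes at $r=1-t$. Thus $D^{j}h_k=c_k\int_{1-t}^1 rf\,D^{j}P^k\,\D r$ for $j\le k$. From $j=k$ onward, genuine boundary terms appear: they involve $f$ evaluated at $1-t$ multiplied by derivatives of $P$ at $r=1-t$. After all $2k$ applications of $D$, the remaining integral is $c'\int_{1-t}^1 rf\,\D r$, whose $t$-derivative is $c'(1-t)f(1-t)$. Consequently $\frac{\D}{\D t}D^{2k}h_k(t)$ equals an explicit sum $\sum_{i=0}^{k}a_i(t)\,f^{(i)}(1-t)$, with coefficients $a_i$ that are rational functions of $t$.

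\textbf{Step 3: matching with $\mathcal{L}_k$ (main obstacle).} Expand the right-hand side $\frac{(-1)^kk!2^{3k}}{t^{2k}}[\mathcal{L}_k(t^{2k+1}f)](1-t)$ using the definition \eqref{lkh:operator} and the Leibniz rule for $D^{k-l}(t^{2k+1}f)$, and compare its coefficients with the $a_i$. The resulting identities are double sums of binomial coefficients. We will prove them with the Egorychev technique: represent each binomial coefficient by a contour integral, sum the geometric series inside the integral, and evaluate by residues.

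If the direct computation becomes unwieldy, the fallback is to start instead from the ODE already obtained in \cite[Theorem 2.1]{PC:2026}. Its right-hand side is a (longer) explicit differential expression in $f$, and it would suffice to prove that this expression coincides with the compact $\mathcal{L}_k$ form via the same contour-integral identities. These are presumably the same identities that establish the equivalence in Theorem \ref{mtheorem}, so we expect to reuse them.

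\textbf{Step 4: recovery.} Read \eqref{ODE:op} as an ODE for $F(r)=r^{2k+1}f(r)$ in the variable $r=1-t$. Its highest-order term is the $l=0$ term $(1-t)^kD^kF=r^k\,t^{-k}\,\frac{\D^k}{\D r^k}F+\text{(lower order)}$, whose coefficient does not vanish for $r\in(\epsilon,1)$. It is therefore a nonsingular linear ODE of order $k$ on $(\epsilon,1)$ with smooth coefficients. The initial data $f^{(i)}(1-\epsilon')=0$ for $0\le i\le k-1$ hold because $1-\epsilon'$ lies outside the support of $f$, so standard uniqueness for linear ODEs shows that $f$ is recovered on $\mathbb{B}(\epsilon,1)$.
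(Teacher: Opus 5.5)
Your route is essentially the paper's, and your fallback is exactly what the paper does. The paper identifies $h_k$ with $G_k(t)=\int_{1-t}^{1}uf(u)[Q(t,u)]^k\,\mathrm{d}u$ (your Step 1, with $c_k=1$ for the stated normalisation). It then uses Proposition \ref{prop1} to get $\frac{\mathrm{d}}{\mathrm{d}t}D^{2k}G_k=-\frac{\mathrm{d}}{\mathrm{d}t}D^{2k}H^{L}_k$, and applies Theorem \ref{mth} with $g=f$ and $h=F=t^{2k+1}f$, using $-(-1)^{k+1}k!\,4^k2^k=(-1)^kk!\,2^{3k}$. Theorem \ref{mth} is proved as you suggest: start from the explicit formula of \cite[Theorem 2.1]{PC:2026}, pass from $g^{(m)}$ to $h^{(j)}$ by the Leibniz rule, expand $\mathcal{L}_k$ in ordinary derivatives via \cite[Lemma 3.3]{PC:2026}, and match coefficients by Egorychev-type contour integrals.

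The gap is that your Step 3 is only announced, and it is the entire mathematical content of the statement. You never compute the $a_i$ in Step 2. Doing so directly means finding closed forms for the boundary terms $[D^{j}P^{k}](1-t,t)$, $k\le j\le 2k-1$, and their further derivatives, i.e.\ re-deriving \cite[Theorem 2.1]{PC:2026}. You also do not state which identities must hold, so beyond the structural fact that both sides have the form $\sum_{i\le k}(\cdot)\,f^{(i)}(1-t)$, nothing is verified. What must be shown is:
\begin{itemize}
\item[(i)] After collapsing an alternating inner sum with \cite[Lemma 3.1]{SRTrange_odd}, the coefficient of $F(1-t)$ is a priori a rational function $P_{k,0}(t)$ with denominator $(1-t)^{2k}$. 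It must be shown to equal the constant $(2k)!/k!$, which requires a nontrivial three-variable contour computation.
\item[(ii)] For $1\le j\le k$ the polynomial coefficients agree, $P_{k,j}=Q_{k,j}$, i.e.\ $B_{k,j,r}=C_{k,j,r}$. The paper obtains this by reducing both sides to a common double sum in Lemmas \ref{ckjr:lemma} and \ref{Bkjr:lemma}.
\end{itemize}
If you may use the results of Section \ref{sec:proofs}, the gap closes in two lines by citing Proposition \ref{prop1} and Theorem \ref{mth}; otherwise you must supply (i) and (ii).

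Step 4, which the paper leaves implicit, is a useful addition, but you misread the notation. $[\mathcal{L}_kF](1-t)$ means $\mathcal{L}_kF$ evaluated at $r=1-t$. So $D=\frac{1}{r}\frac{\mathrm{d}}{\mathrm{d}r}$ acts in the argument $r$ of $F$, and the factors $(1-r)^{k-l}$ in $\mathcal{L}_k$ become $t^{k-l}$. After multiplying \eqref{ODE:op} by $t^{2k}$, the leading term is $(-1)^kk!\,2^{3k}\,t^{k}r^{-k}F^{(k)}(r)$, not a multiple of $r^kt^{-k}F^{(k)}$. The conclusion is unaffected. This coefficient is nonzero on $(\epsilon,1)$ and degenerates only at $r=1$, where $f$ vanishes anyway. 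Hence linear ODE uniqueness with the zero Cauchy data at $r=1-\epsilon'$ recovers $F$, and therefore $f$, on $(\epsilon,1)$.
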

\begin{theorem}[Explicit inversion for general functions]\label{mainthm2}  Let $\epsilon>0$, $n\geq 3$ be an odd integer, and $k:=\frac{n-3}{2}$. Also, assume that $f\in C_{c}^{\infty}(\mathbb{B})$ and for each $(m,l)$ satisfying $m\geq 0$ and $0\leq l\leq d_{m,n}$, we have
$$h_{m,l}(t):=\dfrac{\omega_{2k+3}t^{2k+1}}{\omega_{2k+2}}g_{m, l}(t)\quad \text{and}\quad \widetilde{f}_{m,l}(t)=\frac{f_{m,l}(t)}{t^m}.$$
Then $\widetilde{f}_{m,l}(t)$ can be recovered in the annular region $\mathbb{B}(\epsilon,1)$ by solving  
\begin{equation}
     \left[\frac{d}{dt}D^{m+2k}h_{m,l}\right](t)=\frac{(-1)^{m+k}k!2^{k}}{t^{2(m+k)}}   \left[\mathcal{L}_{m+k}\left(t^{2(m+k)+1}\widetilde{f}_{m, l}\right)\right](1-t),
\end{equation}
with initial conditions $\widetilde{f}_{m,l}^{(i)}(1-\epsilon^{\prime})=0$ for $0\leq i\leq m+ k-1$, where $\epsilon^{\prime}>0$ is small enough such that $1-\epsilon^{\prime}$ lies outside the support of $f$.
\end{theorem}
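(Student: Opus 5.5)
The plan is to reduce everything to a one-variable Volterra-type identity between $h_{m,l}$ and $\widetilde f_{m,l}$, and then to differentiate it until the integral term disappears, leaving only boundary terms. That residual relation is the stated ODE.

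\textbf{Step 1: kernel representation.} First I will write the SMT of $f_{m,l}(|x|)Y_{m,l}(x/|x|)$ in spherical-harmonic form via the Funk--Hecke theorem. For $0<t<1$ this gives
\begin{equation*}
h_{m,l}(t)=\int_{1-t}^{1}K_{m}(r,t)\,\widetilde f_{m,l}(r)\,\D r .
\end{equation*}
Since $n=2k+3$ is odd, $\alpha-\tfrac12=k$ is an integer. The kernel $K_m$ is then, up to the normalising constant absorbed into $h_{m,l}$, of the form
\begin{equation*}
r^{2m+2k+1}\Big[\big(t^2-(1-r)^2\big)\big((1+r)^2-t^2\big)\Big]^{k}\cdot \frac{C^{k+1/2}_m\!\big(\tfrac{1+r^2-t^2}{2r}\big)}{r^{m}}\cdot(\text{power of } r).
\end{equation*}
Two features of $K_m$ matter:
\begin{itemize}
\item $K_m(r,t)$ is a polynomial in $t^2$ of degree exactly $m+k$;
\item it vanishes to order $k$ in $t^2$ at the lower endpoint $r=1-t$, because of the factor $(t^2-(1-r)^2)^k$.
\end{itemize}
These are the same structural facts I expect the proof of Theorem \ref{mtheorem2} to establish when comparing \eqref{rangeodd:gencase} with \eqref{rangecond:even:general}, and I will reuse them.

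\textbf{Step 2: differentiate.} Next I apply $\frac{\D}{\D t}D^{m+2k}$, with $D=\frac1t\frac{\D}{\D t}$, and differentiate under the integral sign. Because $K_m$ has degree $m+k$ in $t^2$, applying $D$ to the integrand more than $m+k$ times annihilates it. Hence the final expression contains no integral, only boundary terms generated at $r=1-t$.

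The vanishing of order $k$ at $r=1-t$ controls when these boundary terms appear. The first $k$ applications of $D$ produce no boundary contributions. The remaining applications each produce one boundary term, and successive differentiations move up to $k$ derivatives onto $\widetilde f_{m,l}$, evaluated at $1-t$. So the right-hand side is a linear differential expression of order $m+k$ in $t^{2(m+k)+1}\widetilde f_{m,l}$ at the point $1-t$.

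\textbf{Step 3: identify the coefficients (main obstacle).} The hardest step is to show that this expression equals
\begin{equation*}
\frac{(-1)^{m+k}k!\,2^{k}}{t^{2(m+k)}}\big[\mathcal{L}_{m+k}\big(t^{2(m+k)+1}\widetilde f_{m,l}\big)\big](1-t),
\end{equation*}
which amounts to a combinatorial identity for the coefficients. My approach is:
\begin{enumerate}
\item Expand both the Gegenbauer factor and the weight $\big[(t^2-(1-r)^2)((1+r)^2-t^2)\big]^{k}$ in powers of $(t^2-(1-r)^2)$.
\item Compute the $D$-derivatives of these powers at the boundary.
\item Collapse the resulting double sums with the Egorychev contour representations of binomial coefficients.
\item Match the result termwise with the coefficients $\frac{(j+l)!}{(j-l)!\,l!\,2^l}$ of $\mathcal{L}_{j}$, $j=m+k$.
\end{enumerate}
I expect these sums to be the ones already evaluated in proving the equivalence in Theorem \ref{mtheorem2}. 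The radial case (Theorem \ref{mainthm1}, $m=0$) serves as a consistency check on the constants.

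\textbf{Step 4: solve the ODE.} The identity gives an ODE for $\widetilde f_{m,l}$ of order $m+k$. Its leading coefficient comes from the $l=0$ term of $\mathcal{L}_{m+k}$ and, after evaluating at $1-t$, is a nonvanishing power of $t$ on $t\in(0,1-\epsilon)$, i.e.\ for radii in $(\epsilon,1)$; so the equation is nonsingular there. Since $f$ is compactly supported in $\mathbb{B}$, $\widetilde f_{m,l}$ and its derivatives vanish near $r=1$. This gives the $m+k$ initial conditions $\widetilde f^{(i)}_{m,l}(1-\epsilon')=0$ for $0\le i\le m+k-1$. Standard uniqueness for linear ODEs then recovers $\widetilde f_{m,l}$ on $(\epsilon,1)$, and hence $f$ on $\mathbb{B}(\epsilon,1)$.
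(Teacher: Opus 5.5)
Your Step 3 is where the proof breaks down. It carries the entire content of the statement, yet you only outline a computation and say you \emph{expect} its sums to coincide with ones already evaluated. They do not: the paper never differentiates a Gegenbauer kernel, and in particular not in the proof of Theorem \ref{mtheorem2}.

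What you are missing is the factorization the paper uses, quoted from \cite[Theorem 2.3]{PC:2026}. It is also the first bullet of Theorems \ref{generalrange} and \ref{SRTrangeeven:general}:
\[
h_{m,l}=\frac{k!}{2^{m}4^{m+k}(m+k)!}\,D^{m}\phi_{m,l},\qquad \phi_{m,l}(t)=\int_{1-t}^{1}u\,\widetilde f_{m,l}(u)\,[Q(t,u)]^{m+k}\,\mathrm{d}u .
\]
This is just the Rodrigues formula $\frac{\mathrm{d}^m}{\mathrm{d}x^m}(1-x^2)^{m+k}\propto(1-x^2)^kC^{k+1/2}_m(x)$, written in the variable $t^2$. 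Here $D=2\,\mathrm{d}/\mathrm{d}(t^2)$ and $Q=4r^2(1-s^2)$ with $s=\frac{1+r^2-t^2}{2r}$. No boundary terms arise, because $Q^{m+k}$ vanishes to order $m+k$ at $u=1-t$.

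Consequently $\frac{\mathrm{d}}{\mathrm{d}t}D^{m+2k}h_{m,l}$ is a constant multiple of $\frac{\mathrm{d}}{\mathrm{d}t}D^{2(m+k)}\phi_{m,l}$. Proposition \ref{prop1} together with Theorem \ref{mth}, applied with $k$ replaced by $m+k$, then gives the ODE at once. The constant is $\frac{k!}{2^m4^{m+k}(m+k)!}\cdot(m+k)!\,4^{m+k}2^{m+k}=k!\,2^k$. No new combinatorial identity is needed. Conversely, your proposed ``consistency check'' $m=0$ is not a check: it is exactly Theorem \ref{mth}, whose proof is the bulk of the paper.

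Your bookkeeping in Steps 1--2 is also off. $Q(t,r)^k$ alone already has degree $2k$ in $t^2$ (degree $4k$ in $t$, as used in the proof of Proposition \ref{prop1}). The factor $r^mC^{k+1/2}_m(s)$ adds degree $m$, so your kernel has degree $m+2k$ in $t^2$, not $m+k$. That is precisely why the operator is $\frac{\mathrm{d}}{\mathrm{d}t}D^{m+2k}$: $D^{m+2k}$ leaves a $t$-independent kernel, and the final $\frac{\mathrm{d}}{\mathrm{d}t}$ removes the integral.

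With the correct count, the first $k$ applications of $D$ produce no boundary terms, and each of the remaining $m+k+1$ differentiations does. So up to $m+k$ derivatives land on $\widetilde f_{m,l}$, matching the order of $\mathcal{L}_{m+k}$. Your claim that ``up to $k$ derivatives'' move onto $\widetilde f_{m,l}$ contradicts your own conclusion that the expression has order $m+k$.

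Step 4 is fine: the leading coefficient is nonvanishing, the initial data come from the support of $f$, and uniqueness for linear ODEs applies.
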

\section{Proof of main results}\label{sec:proofs}
In this section, we first reformulate the integral condition \eqref{cond:even} into a more convenient form suitable for further analysis. We then establish several preparatory results that will be required in the proof of our main theorems.

We begin by simplifying each condition and then show that they are equivalent at an appropriate stage. Let us rewrite the condition \eqref{cond:even} to make it easier to handle. 

For $n=2k+3$, $\alpha=\frac{n-2}{2}$, and $h(t)=t^{n-2}g(t)=t^{2k+1}g(t)$, it can be rewritten as
\begin{align}
    \int_0^{1-t} ug(u)\left\{\left[(1+u)^2-t^2\right]\left[(1-u)^2-t^2\right]\right\}^{k} \mathrm{d}u=&\int_{1+t}^2 ug(u)\left\{\left[(1+u)^2-t^2\right]\left[(1-u)^2-t^2\right]\right\}^{k} \mathrm{d} u,
    \end{align}
or equivalently,
\begin{equation}\label{sim:cond:even}
    \int_0^{1-t} ug(u)[Q(t,u)]^{k} \mathrm{~d} u=\int_{1+t}^2 ug(u)[Q(t,u)]^{k}\mathrm{~d} u,~~ \text{where } Q(t,u)=((1+t)^2-u^2)(u^2-(1-t)^2).
\end{equation}
\textbf{Notation:} For simplicity, we introduce the following notations, which we will use throughout the article.
\begin{enumerate}
    \item\label{It1} 
   $ H^{L}_{k}(t):=\int_0^{1-t} ug(u)[Q(t,u)]^{k} \mathrm{~d} u$;
  \item\label{It2} 
   $ H^{R}_{k}(t):=\int_{1+t}^{2} ug(u)[Q(t,u)]^{k} \mathrm{~d} u$;
   \item\label{It3} $ G_{k}(t)
    :=\int_{1-t}^{1} ug(u)[Q(t,u)]^{k} \mathrm{~d} u$.
\end{enumerate}
Our aim is to  establish
\begin{equation}\label{mtheorem1Leq}
    H_{k}^{L}(t)=H_{k}^{R}(t)\quad \text{if and only if }\quad \left[\mathcal{L}_k h\right](1-t)=\left[\mathcal{L}_k h\right](1+t).
\end{equation}
The integral $G_{k}(t)$ was extensively analyzed in the derivation of the explicit inversion formula for SMT in odd dimensions in \cite{PC:2026}. Similar methods can also be used to work with the integrals $H^{L}_{k}(t)$ and
$H^{R}_{k}(t)$. 
The following proposition provides a relation between the derivatives of $H^{L}_{k}(t),H^{R}_{k}(t)$ and $G_{k}(t)$.
\begin{proposition}\label{prop1} 
For $k\geq 0$, we have
\begin{equation*}
    \left[\frac{\D}{\D t}D^{2k}H^{L}_{k}\right](t)=-\left[\frac{\D}{\D t}D^{2k}G_{k}\right](t)=\left[\frac{\D}{\D t}D^{2k}H^{R}_{k}\right](-t).
\end{equation*}   
\end{proposition}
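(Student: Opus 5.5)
The plan rests on one algebraic observation: $Q(t,u)$ is an even polynomial in $t$. Put $a:=1+t^2-u^2$. Then
\begin{equation*}
u^2-(1-t)^2=2t-a,\qquad (1+t)^2-u^2=a+2t,
\end{equation*}
so that
\begin{equation*}
Q(t,u)=4t^2-(1+t^2-u^2)^2 .
\end{equation*}
Hence, for fixed $u$, $[Q(t,u)]^k$ is a polynomial of degree $2k$ in $s=t^2$. Since $D=\frac{1}{t}\frac{\D}{\D t}=2\frac{\D}{\D s}$, the operator $D^{2k}$ maps any polynomial of degree $\le 2k$ in $t^2$ to a constant. Therefore $\frac{\D}{\D t}D^{2k}$ annihilates every function of the form $\int_a^b u g(u)[Q(t,u)]^k\,\D u$ with limits $a,b$ \emph{independent of $t$}. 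All the $t$-dependence that survives comes from the moving limits $1\pm t$, and the proof consists of cancelling those moving limits against each other.

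\emph{First equality.} For $0<t<1$ we have
\begin{equation*}
H^L_k(t)+G_k(t)=\int_0^1 u g(u)[Q(t,u)]^k\,\D u,
\end{equation*}
which has fixed limits, so it is a polynomial of degree $\le 2k$ in $t^2$. Applying $\frac{\D}{\D t}D^{2k}$ gives zero, which yields $\left[\frac{\D}{\D t}D^{2k}H^L_k\right](t)=-\left[\frac{\D}{\D t}D^{2k}G_k\right](t)$. No differentiation under the integral with moving endpoints is needed here, because the sum has fixed endpoints. Smoothness is not an issue: $g\in C_c^\infty((0,2))$, so every integral is smooth in $t$ on $(0,1)$.

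\emph{Second equality.} Regard $\varphi(t):=H^R_k(t)=\int_{1+t}^2 u g(u)[Q(t,u)]^k\,\D u$ as defined by the same formula for $t\in(-1,1)$, and set $\psi(t):=\varphi(-t)$. Because $Q$ is even in $t$,
\begin{equation*}
\psi(t)=\int_{1-t}^2 u g(u)[Q(t,u)]^k\,\D u ,
\end{equation*}
and so
\begin{equation*}
\psi(t)+H^L_k(t)=F(t):=\int_0^2 u g(u)[Q(t,u)]^k\,\D u,
\end{equation*}
again a polynomial of degree $\le 2k$ in $t^2$. Consequently $\frac{\D}{\D t}D^{2k}\psi=-\frac{\D}{\D t}D^{2k}H^L_k$ on $(0,1)$.

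It remains to relate $\frac{\D}{\D t}D^{2k}\psi$ to $\varphi$. Let $(R\,w)(t)=w(-t)$. Both $\frac{1}{t}$ and $\frac{\D}{\D t}$ change sign under $R$, so $D$ commutes with $R$, whereas $\frac{\D}{\D t}$ anticommutes with it. Hence
\begin{equation*}
\left[\frac{\D}{\D t}D^{2k}\psi\right](t)=-\left[\frac{\D}{\D t}D^{2k}\varphi\right](-t),
\end{equation*}
valid for $t\neq 0$, where $D$ is defined. Combining the two displays gives $\left[\frac{\D}{\D t}D^{2k}H^R_k\right](-t)=\left[\frac{\D}{\D t}D^{2k}H^L_k\right](t)$, which completes the chain of equalities.

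The only delicate point is this interpretation of $H^R_k(-t)$: it must be read as the same integral formula evaluated at negative argument, which is legitimate because the formula is smooth for $t\in(-1,1)$. The identity is then asserted for $0<t<1$, where the singularity of $D$ at $t=0$ never enters. I expect no real obstacle beyond verifying the evenness identity for $Q$ and the commutation rules of $D$ and $\frac{\D}{\D t}$ with $R$.
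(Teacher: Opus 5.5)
Your proof is correct and follows essentially the same route as the paper. You write suitable combinations of $H^L_k$, $G_k$ and $H^R_k(-t)$ as integrals with $t$-independent limits, note that $\frac{\D}{\D t}D^{2k}$ annihilates these, and handle the reflection $t\mapsto -t$ by the chain rule. The paper pairs $H^R_k(-t)$ with $G_k(t)$ and $H^L_k(-t)$ with $H^R_k(t)$, which is only a rearrangement of your pairings.

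Your identity $Q(t,u)=4t^2-(1+t^2-u^2)^2$ is a useful sharpening. What makes $\frac{\D}{\D t}D^{2k}$ kill the fixed-limit integrals is the evenness of $Q$ in $t$, not merely the fact that $[Q(t,u)]^k$ has degree $4k$ in $t$, which is all the paper states.
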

    \begin{proof}
       We consider
    \begin{equation}\label{Hkr}
        H^{R}_{k}(-t)=\int_{1-t}^{2} ug(u)[Q(-t,u)]^{k} \mathrm{~d} u=G_{k}(t)+\int_{1}^{2} ug(u)[Q(t,u)]^{k} \mathrm{~d} u.
    \end{equation}
    This gives
    \begin{equation}
       -\left[\frac{\D}{\D t}D^{2k}H^{R}_{k}\right](-t)= \left[\frac{\D}{\D t}D^{2k}G_{k}\right](t).
    \end{equation} 
   Here we used the fact that $Q(t,u)^k$ is a polynomial in $t$ of degree $4k$, and hence the second term in \eqref{Hkr} vanishes upon applying  $\frac{d}{dt}D^{2k}$.
    We now consider 
    \begin{align*}
      \int_{0}^{2}ug(u)[Q(t,u)]^{k}\D u&=        \int_{0}^{1+t}ug(u)[Q(t,u)]^{k}\D u+        \int_{1+t}^{2}ug(u)[Q(t,u)]^{k}\D u=   H^{L}_{k}(-t)+H^{R}_{k}(t).
    \end{align*}
    Upon differentiating and using the chain rule, we get
    \begin{align*}
        0=-\left[\frac{\D}{\D t}D^{2k}H^{L}_{k}\right](-t)+\left[\frac{\D}{\D t}D^{2k}H^{R}_{k}\right](t)\implies  \left[\frac{\D}{\D t}D^{2k}H^{L}_{k}\right](-t)=\left[\frac{\D}{\D t}D^{2k}H^{R}_{k}\right](t).
    \end{align*}
    This completes the proof.
    \end{proof}
We now proceed to compute these derivatives explicitly in the following theorem.
\begin{theorem}\label{mth} For $k\geq 0$ and $h(t)=t^{2k+1}g(t)$, we have
 \begin{equation}\label{mth:eq1}
        \left[\frac{\D}{\D t}D^{2k}H^{L}_{k}\right](t)
			=\frac{(-1)^{k+1}k!4^{k}2^{k}}{t^{2k}}   [\mathcal{L}_{k}h](1-t).
    \end{equation}
\end{theorem}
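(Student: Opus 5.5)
Our plan is to compute $\frac{\D}{\D t}D^{2k}H^{L}_{k}$ directly. We expand $[Q(t,u)]^k$ in powers of $t$, differentiate under the integral sign, and then track the boundary terms coming from the variable upper limit $1-t$. It is convenient to write
\[
Q(t,u)=\bigl((1+t)^2-u^2\bigr)\bigl(u^2-(1-t)^2\bigr)=4t^2-(t^2+u^2-1)^2 .
\]
As a function of $s=t^2$ this is a quadratic polynomial, and $D=\frac{1}{t}\frac{\D}{\D t}=2\frac{\D}{\D s}$. So $D$ acts naturally on $Q^k$, which is a polynomial of degree $2k$ in $s$.

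\textbf{Step 1: peel off the operators one at a time.} Applying $D$ to $H^L_k(t)=\int_0^{1-t}ug(u)Q^k\,\D u$ produces two contributions. The first is $\int_0^{1-t}ug(u)\,D(Q^k)\,\D u$. The second is the boundary term $-\frac{1}{t}(1-t)g(1-t)\,[Q(t,1-t)]^k$, and this vanishes for $k\ge1$ because $Q(t,1-t)=0$. Moreover, $Q$ vanishes to first order at $u=1-t$, so $D^j(Q^k)$ still vanishes at $u=1-t$ for $j<k$. Hence the first $k$ applications of $D$ pass under the integral with no boundary terms.

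After that, each further application of $D$ (and the final $\frac{\D}{\D t}$) produces a genuine boundary term. Each such term involves $\partial_s^{j}(Q^k)$ evaluated at $u=1-t$, multiplied by $g(1-t)$ and by derivatives of $g$ at $1-t$ that come from differentiating earlier boundary terms. The remaining integral $\int_0^{1-t}ug(u)\,\partial_s^{2k}(Q^k)\,\D u$ has a constant integrand factor in $t$, because $Q^k$ has degree $2k$ in $s$ with leading coefficient $(-1)^k$, so $\partial_s^{2k}Q^k=(-1)^k(2k)!$. The final $\frac{\D}{\D t}$ turns this integral into a pure boundary term at $u=1-t$ as well.

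\textbf{Step 2: organize the boundary terms and match them with $\mathcal{L}_k$.} In total, $\frac{\D}{\D t}D^{2k}H^L_k(t)$ should become a linear combination of $t^{-a}(1-t)^{b}\,g^{(j)}(1-t)$. We need to express this in terms of $h(u)=u^{2k+1}g(u)$, evaluated at $u=1-t$ through $(D^{k-l}h)$ with $D$ acting in the variable of $h$, as in \eqref{lkh:operator}.

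A cleaner route, which we would try first, is to reuse the computation already done for $G_k$ in \cite{PC:2026}. By Proposition \ref{prop1}, $\frac{\D}{\D t}D^{2k}H^L_k=-\frac{\D}{\D t}D^{2k}G_k$. The inversion result of \cite{PC:2026}, in the form quoted as \eqref{ODE:op} in Theorem \ref{mainthm1}, expresses exactly this derivative of the $G_k$-type integral as $\frac{c_k}{t^{2k}}[\mathcal{L}_k(\cdot)](1-t)$. In our setting the role of the integrand is played by $ug(u)$, i.e.\ by $h(u)/u^{2k}$. So we would reread the \cite{PC:2026} computation with $t^{2k+1}f$ replaced by $h$, and check the normalization: the factor $\frac{4^k\omega_{2k+3}}{\omega_{2k+2}}$ there, against the $\frac{1}{\omega_{n-1}}$ normalization here, should convert the constant $k!2^{3k}$ in \eqref{ODE:op} into $k!4^k2^k$, with an overall sign $(-1)^{k+1}$ from Proposition \ref{prop1}.

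\textbf{Main obstacle.} If that reduction does not apply verbatim, the hard step is the direct combinatorial identification in Step 2. There we must show that the coefficients of the boundary terms collapse to $\frac{(k+l)!}{(k-l)!\,l!\,2^l}(1-t)^{k-l}$. These coefficients come from the derivatives $\partial_s^j Q^k$ at $u=1-t$, where $Q=-(s+u^2-1)^2+4s$; at that point $s+u^2-1=2t^2-2t$ and $4s=4t^2$. Collapsing them also requires commuting $D_t$ with evaluation at $1-t$, where $D_u$ and $D_t$ are related by $u\,\D u=-(1-t)\,\D t$-type factors. We would write each coefficient as a finite binomial sum and evaluate it with Egorychev's contour-integral representation, which is exactly the tool the preliminaries set up for this purpose. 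Before attempting the general case, we would verify the cases $k=0$ and $k=1$ by hand to fix signs and powers of $2$.
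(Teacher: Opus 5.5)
\textbf{There is a genuine gap: neither of your routes establishes the identity.} Your preferred ``cleaner route'' is circular. The $\mathcal{L}_k$ form \eqref{ODE:op} is not what \cite{PC:2026} proves. It is Theorem \ref{mainthm1} of this paper, and its proof is Proposition \ref{prop1} plus the very statement you are trying to prove. What \cite[Theorem 2.1]{PC:2026} actually supplies is \eqref{Gkode}: an explicit triple sum of polynomial multiples of $g^{(m)}(1-t)$, with no $\mathcal{L}_k$ in it. The entire content of the statement is that this sum, rewritten in terms of $h=u^{2k+1}g$, equals $\frac{(-1)^{k+1}k!4^k2^k}{t^{2k}}[\mathcal{L}_kh](1-t)$. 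So ``rereading the computation with $t^{2k+1}f$ replaced by $h$'' assumes exactly what must be shown. There is also no normalization to convert: $k!2^{3k}=k!4^k2^k$, and the prefactor in $h_k$ is chosen so that $h_k$ is itself a $G_k$-type integral.

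Your fallback is structurally the paper's route, but it stops where the proof begins. Your Step 1 only reproduces the shape of \eqref{Gkode}, which the paper imports through Proposition \ref{prop1}. The paper then does three things:
\begin{enumerate}
\item[(i)] It rewrites $g^{(m)}(1-t)$ in terms of $h^{(j)}(1-t)$ by Leibniz's rule and collapses the innermost sum with \cite[Lemma 3.1]{SRTrange_odd}. This gives the polynomials $P_{k,j}(t)$.
\item[(ii)] It expands $\mathcal{L}_k$ into ordinary derivatives using the formula for $D^r$ in \cite[Lemma 3.3]{PC:2026}. This gives the polynomials $Q_{k,j}(t)$.
\item[(iii)] It proves the binomial identities $P_{k,0}=\frac{(2k)!}{k!}$ and $B_{k,j,r}=C_{k,j,r}$ for the coefficients of $t^{2k-j-r}$ (Lemmas \ref{ckjr:lemma} and \ref{Bkjr:lemma}). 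Each needs specific contour choices and changes of variables.
\end{enumerate}
None of (i)--(iii) is in your proposal. Saying you would ``evaluate with Egorychev's method'' names a tool, not an identity.

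The obstacle is that the boundary terms arrive as ordinary $t$-derivatives of $g$ at $1-t$ times powers of $t$. By contrast, $\mathcal{L}_k$ involves $D_u$-derivatives of $h=u^{2k+1}g$, and $D_u=-\frac{1}{1-t}\frac{\D}{\D t}$ at $u=1-t$. Reconciling these two forms is the nontrivial combinatorics. This is why the paper passes through ordinary derivatives of $h$ on both sides rather than expecting a term-by-term collapse.

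Your stated target is also mis-specified. At the point $1-t$, the weight in $\mathcal{L}_k$ is $\frac{(k+l)!}{(k-l)!\,l!\,2^l}\,t^{k-l}$, not $(1-t)^{k-l}$.

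Finally, $Q(t,u)=4t^2-(1+t^2-u^2)^2$, not $4t^2-(t^2+u^2-1)^2$. With your formula, $Q(t,1-t)=4t^2-(2t^2-2t)^2\neq 0$, which contradicts the vanishing you correctly use in Step 1. The boundary value you list for Step 2 is therefore wrong; the correct one is $1+t^2-u^2=2t$ at $u=1-t$.
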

Once the above theorem is established, the proof of the main Theorem \ref{mtheorem} follows immediately by combining it with Proposition \ref{prop1}, as shown below.
\begin{proof}[Proof of Theorem \ref{mtheorem}] Suppose  $h$ satisfies the integral condition \eqref{cond:even}, that is, $H_{k}^{L}(t)=H_{k}^{R}(t)$. Combining this identity with Proposition \ref{prop1}, we obtain that
\begin{equation}
    \left[\frac{\D}{\D t}D^{2k}H^{L}_{k}\right](t)=\left[\frac{\D}{\D t}D^{2k}H^{R}_{k}\right](t)=\left[\frac{\D}{\D t}D^{2k}H^{L}_{k}\right](-t).
\end{equation}
Combining this relation with Theorem \ref{mth}, we get
\begin{equation}
\frac{(-1)^{k+1}k!4^{k}2^{k}}{t^{2k}}   [\mathcal{L}_{k}h](1-t)=     \left[\frac{\D}{\D t}D^{2k}H^{L}_{k}\right](t)=\left[\frac{\D}{\D t}D^{2k}H^{L}_{k}\right](-t)=\frac{(-1)^{k+1}k!4^{k}2^{k}}{t^{2k}}   [\mathcal{L}_{k}h](1+t),
\end{equation}
whence we obtain
\begin{equation}
    [\mathcal{L}_{k}h](1-t)=      [\mathcal{L}_{k}h](1+t)\,\,\text{for all}\, t\in[0,1].
\end{equation}
Hence, $h$ satisfies the differential condition \eqref{cond:odd}.\\
Conversely, suppose $h$ satisfies the differential condition \eqref{cond:odd}. Note that using Proposition \ref{prop1} together with Theorem \ref{mth}, we have,
\begin{align}
    \left[\frac{\D}{\D t}D^{2k}H^{L}_{k}\right](t)
			=\frac{(-1)^{k+1}k!4^{k}2^{k}}{t^{2k}}   [\mathcal{L}_{k}h](1-t)~~\text{and}~~\left[\frac{\D}{\D t}D^{2k}H^{R}_{k}\right](t)
			=\frac{(-1)^{k+1}k!4^{k}2^{k}}{t^{2k}}   [\mathcal{L}_{k}h](1+t).
\end{align}
Using condition \eqref{cond:odd} for $t\in(0,1]$ we have
\begin{align}
    \left[\frac{\D}{\D t}D^{2k}H^{L}_{k}\right](t)=\left[\frac{\D}{\D t}D^{2k}H^{R}_{k}\right](t)
\end{align}
 Using the support restriction on $g$, we observe that both $H^{L}_{k}(t)$ and $H^{R}_{k}(t)$ vanish for $t$ sufficiently close to $1$. Therefore, the identity $H^{L}_{k}(t)=H^{R}_{k}(t)$ on $t\in(\epsilon,1]$ follows by repeatedly integrating over the interval $[t,1]$. Finally, using its smoothness and letting $\epsilon\to 0$, we conclude that $H^{L}_{k}(t)=H^{R}_{k}(t)$ for all $t\in[0,1]$. This completes the proof.
    \end{proof}
The proof of Theorem \ref{mth} is a straightforward consequence of the following propositions. For the sake of completeness, we will briefly present the argument. In these propositions, we systematically simplify both sides of \eqref{mth:eq1} to a form in which they can be directly compared.
\begin{proposition}\label{prop:mthm1}
    For $k\geq 0$ and $h(t)=t^{2k+1}g(t)$, we have
    \begin{equation}\label{expdDH}
        \left[\frac{\D}{\D t}D^{2k}H^{L}_{k}\right](t)=\frac{(-1)^{k+1}k!4^{k}}{t^{2k}}\left(\frac{(2k)!}{k!}h(1-t)+\sum_{j=1}^{k}P_{k,j}(t)\frac{1}{(1-t)^{2k-j}} h^{(j)}(1-t)\right),
    \end{equation}
    where 
    $P_{k,j}(t)=\sum\limits_{r=0}^{2k-2j}C_{k,j,r}t^{2k-j-r}$ is a polynomial of degree at most $2k-j$, and the coefficient $C_{k,j,r}$ is given by   
        \begin{align*}
         C_{k,j,r} 
   =(-1)^{r}\frac{k!2^{j}}{j!}\sum_{l=0}^{r+j}\sum_{p=0}^{2k+l-j}2^{p}\binom{r+j}{l}\binom{2k-l}{k-l}\binom{2k+l-j}{p}\binom{2l-j-p}{2l-2j-r-p}.
    \end{align*}
\end{proposition}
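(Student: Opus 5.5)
We need to compute $\frac{\D}{\D t}D^{2k}H^L_k(t)$, where $H^L_k(t)=\int_0^{1-t}ug(u)Q(t,u)^k\,\D u$ and $D=\frac1t\frac{\D}{\D t}$. The plan is to exploit the fact that $Q(t,u)^k$ vanishes to order $k$ at the endpoint $u=1-t$. Differentiation under the integral sign therefore produces no boundary terms until the derivative count exceeds $k$. The integral term itself dies once we have applied enough derivatives, because $Q^k$ is a polynomial of degree $2k$ in $t^2$: $Q(t,u)=-(t^2-(1+u)^2)(t^2-(1-u)^2)$ is a quadratic in $s=t^2$. Since $D=2\frac{\D}{\D s}$, applying $D^{2k}$ to $Q^k$ under the integral sign gives a constant in $t$, namely $(-1)^k(2k)!\,2^{2k}$. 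The outer $\frac{\D}{\D t}$ then kills the integral term. So the whole answer comes from boundary contributions at $u=1-t$.

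First I will organise these boundary terms. Write $H^L_k(t)=\int_0^{1-t}F(t,u)\,\D u$ with $F=ug(u)Q^k$. A clean way is to work in $s=t^2$ and use the generalised Leibniz formula for differentiating $\int_0^{a(s)}F(s,u)\,\D u$ repeatedly, where $a(s)=1-\sqrt s$. Alternatively, I will substitute $u=1-t-v$ or $u=(1-t)\tau$ to freeze the limits. I prefer to factor $Q=(1+t-u)(1+t+u)(u-1+t)(u+1-t)$ and set $u=1-t+w$ with $w\in[-(1-t),0]$. Then $Q^k=w^k(2-w\ldots)^k\cdots$. The factor $w^k$ shows that the first $k$ applications of $D$ produce no boundary terms. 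Each subsequent application of $D$, from the $(k+1)$-st to the $2k$-th, together with the final $\frac{\D}{\D t}$, produces a boundary term. That term involves $\partial_u^{\,i}$ of $ug(u)\cdot(\text{remaining factors})$ at $u=1-t$, after integrating by parts in $w$, or equivalently after expanding $\partial_t$ acting on $w^k$. Collecting these gives a linear combination of $g^{(j)}(1-t)$, $0\le j\le k$, with coefficients rational in $t$ and $1-t$.

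Next I will convert from $g$ to $h(u)=u^{2k+1}g(u)$, noting $ug(u)=h(u)/u^{2k}$, and redistribute the derivatives by Leibniz. The top-order coefficient can be checked independently by looking at the leading behaviour: the term $\frac{(2k)!}{k!}h(1-t)$ comes from all derivatives landing on $w^k$ and the remaining factors evaluated at $w=0$, namely $(2t)^k(2(1-t))^k\cdot 2^k\ldots$. The prefactor $\frac{(-1)^{k+1}k!4^k}{t^{2k}}$ arises from the $t^{-1}$ factors in $D$. This fixes the normalisation.

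The main obstacle is obtaining the closed form for $C_{k,j,r}$. For this I will write each coefficient as a finite multiple sum of products of binomials coming from three sources: the Leibniz expansions, the expansion of $(1+t\pm u)^k$ about $u=1-t$ (giving powers of $2$ and binomials $\binom{2k+l-j}{p}$), and the derivatives of $u^{-2k}$ (giving $\binom{2k-l}{k-l}$-type terms). I will then use Egorychev's contour representation of the binomials to collapse the redundant summations, reducing to the stated double sum over $l,p$. Expanding in powers of $t$ and matching the exponent $2k-j-r$ identifies $r$ and produces the sign $(-1)^r$. If the direct bookkeeping becomes unwieldy, I would fall back on the analogous computation for $G_k$ in \cite{PC:2026}. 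Since $H^L_k$ and $-G_k$ have the same $\frac{\D}{\D t}D^{2k}$ by Proposition~\ref{prop1}, any explicit formula proven there for $\frac{\D}{\D t}D^{2k}G_k$ transfers directly, and the remaining work is to re-express it in the variable $h$ to obtain \eqref{expdDH}.
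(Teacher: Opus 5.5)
Your structural observations are correct. $Q$ is quadratic in $s=t^2$ with leading coefficient $-1$, and $D=2\frac{\D}{\D s}$, so $D^{2k}Q^k=(-1)^k4^k(2k)!$. The whole answer therefore consists of boundary terms at $u=1-t$, the first of which appears at the $(k+1)$-st application of $D$. Your fallback is exactly the paper's Step~1: take $\frac{\D}{\D t}D^{2k}G_k$ from \cite[Theorem 2.1]{PC:2026}, transfer it by Proposition~\ref{prop1}, and insert the Leibniz expansion of $g=h/t^{2k+1}$.

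The gap is that the proposition \emph{is} the resulting coefficient identity, and you only announce it. After the Leibniz substitution one faces a fourfold sum over $j,m,n,l$. The paper needs three further steps:
\begin{enumerate}
\item collapse the $m$-sum with the binomial identity of \cite[Lemma 3.1]{SRTrange_odd};
\item show that the coefficient $P_{k,0}$ of $h(1-t)$ is constant, by a separate contour evaluation (changes of variables $z(1-z)=\eta$, $\alpha+\beta=\delta$, \ldots{} adapted from \cite{SRTrange_odd});
\item convert the remaining triple sum into the stated $C_{k,j,r}$ via Lemma~\ref{ckjr:lemma}.
\end{enumerate}
Saying that Egorychev's method will ``collapse the redundant summations'' supplies none of these identities. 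Your assignment of $\binom{2k-l}{k-l}$ and $\binom{2k+l-j}{p}$ to derivatives of $u^{-2k}$ and to expansions of $(1+t\pm u)^k$ is read off from the target rather than derived; in the paper both arise only as residues after the contour manipulations. As it stands, your argument shows only that $t^{2k}\frac{\D}{\D t}D^{2k}H^L_k(t)$ is \emph{some} combination of $h^{(j)}(1-t)$, $0\le j\le k$, with rational coefficients.

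The one concrete shortcut you offer is also wrong. The term $\frac{(2k)!}{k!}h(1-t)$ is the zeroth-order term in $h$, not a top-order one. Its coefficient is not produced by a single placement of the derivatives: every $g^{(m)}(1-t)$, $0\le m\le k$, feeds into it through $g^{(m)}=\sum_j(\cdots)h^{(j)}$, each with a non-constant rational coefficient. Already for $k=1$ a direct computation gives
\[
\Bigl[\frac{\D}{\D t}D^{2}H^{L}_{1}\Bigr](t)=\frac{8(1-t)(1+t+t^{2})}{t^{2}}\,g(1-t)+\frac{8(1-t)^{2}}{t}\,g'(1-t).
\]
With $g(u)=h(u)/u^3$, the coefficient of $h(1-t)$ is $\frac{8}{t^2}\cdot\frac{(1+t+t^2)-3t}{(1-t)^2}=\frac{8}{t^2}$, which is constant only after cancellation between the two terms. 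For general $k$ the paper's $P_{k,0}(t)$ is a double sum divided by $(1-t)^{2k}$. Proving that it equals $(2k)!/k!$ is one of the two genuinely hard steps of the proof, not a normalisation check.
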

\begin{proposition}\label{prop:thm2}
For $k\geq 0$ and $h(t)=t^{2k+1}g(t)$, we have
    \begin{equation}\label{explkh}
        [\mathcal{L}_{k}h](1-t)=\frac{1}{2^{k}}\left(\frac{(2k)!}{k!}h(1-t)+\sum_{j=1}^{k}P_{k,j}(t)\frac{1}{(1-t)^{2k-j}} h^{(j)}(1-t)\right),
    \end{equation}
    where $P_{k,j}(t)$ is the polynomial defined in Proposition \ref{prop:mthm1}.
\end{proposition}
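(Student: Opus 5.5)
The plan is to expand $\mathcal{L}_k h$ directly and push every derivative onto $h$. By \eqref{lkh:operator},
\begin{equation*}
[\mathcal{L}_k h](s)=\sum_{l=0}^{k}\frac{(k+l)!}{(k-l)!\,l!\,2^{l}}(1-s)^{k-l}[D^{k-l}h](s),
\end{equation*}
and at $s=1-t$ the factor $(1-s)^{k-l}$ equals $t^{k-l}$. The first step is to write $D^{i}=(s^{-1}\frac{\D}{\D s})^{i}$ as a combination of ordinary derivatives:
\begin{equation*}
D^{i}h(s)=\sum_{j=1}^{i}(-1)^{i-j}\frac{(2i-j-1)!}{2^{i-j}(i-j)!(j-1)!}\,s^{j-2i}h^{(j)}(s),\qquad i\ge1.
\end{equation*}
These are the Bessel-polynomial coefficients. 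I would check this formula by induction on $i$, since $D^{i+1}=s^{-1}\frac{\D}{\D s}D^{i}$ produces the standard two-term recurrence for the coefficients. The only $h(1-t)$ term comes from $l=k$, with coefficient $\frac{(2k)!}{k!\,2^{k}}$. This already matches the leading term $\frac{1}{2^k}\frac{(2k)!}{k!}h(1-t)$ in \eqref{explkh}.

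For $j\ge1$, the coefficient of $h^{(j)}(1-t)$ is then
\begin{equation*}
\sum_{l=0}^{k-j}\frac{(k+l)!}{(k-l)!\,l!\,2^{l}}\,t^{k-l}(-1)^{k-l-j}\frac{(2k-2l-j-1)!}{2^{k-l-j}(k-l-j)!(j-1)!}(1-t)^{j-2k+2l}.
\end{equation*}
Factoring out $2^{-k}(1-t)^{-(2k-j)}$ leaves
\begin{equation*}
\widetilde P_{k,j}(t)=\sum_{l=0}^{k-j}c_{l}\,t^{k-l}(1-t)^{2l},
\end{equation*}
a polynomial of degree at most $k+(k-j)=2k-j$. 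This is consistent with the stated degree of $P_{k,j}$. The terms with $j>k$ are absent, which is why the sum in \eqref{explkh} stops at $k$. The structural part of \eqref{explkh} therefore holds, with some polynomial $\widetilde P_{k,j}$ in place of $P_{k,j}$.

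The remaining task, and the main obstacle, is to show $\widetilde P_{k,j}=P_{k,j}$, i.e.\ that the coefficient of $t^{2k-j-r}$ in $\widetilde P_{k,j}$ equals $C_{k,j,r}$ as given in Proposition \ref{prop:mthm1}.
\begin{itemize}
\item First I expand $(1-t)^{2l}$ binomially and extract the coefficient of $t^{2k-j-r}$ from $\widetilde P_{k,j}$, which gives a single finite sum $S_{k,j,r}$ over $l$.
\item $C_{k,j,r}$ is a double sum of products of four binomials. I will use the Egorychev technique to collapse it: replace $\binom{2l-j-p}{2l-2j-r-p}$ and $\binom{2k+l-j}{p}$ by the contour integrals above.
\item The sum over $p$ is then geometric in $2z$ and closes to a factor like $(1+2z(1+z))^{2k+l-j}$ or its analogue.
\item Next I carry out the sum over $l$ against $\binom{r+j}{l}\binom{2k-l}{k-l}$, representing $\binom{2k-l}{k-l}$ by a second contour integral.
\item The residue at the surviving pole should reduce the result to a single binomial sum. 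I expect this to match $S_{k,j,r}$, perhaps after the substitution $w\mapsto$ a Möbius change of variable.
\end{itemize}
Negative upper indices arise when $2l-j-p<0$, and these must be handled by choosing the contour representation valid there. I would first verify the identity for $k=1,2$ numerically to pin down signs.

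Since Proposition \ref{prop:mthm1} is already stated with the same $P_{k,j}$, an alternative I would keep in reserve avoids the explicit sums altogether. I would show that both $t^{2k}(-1)^{k+1}(k!\,4^k)^{-1}\frac{\D}{\D t}D^{2k}H_k^L$ and $2^{k}t^{2k}\mathcal{L}_k h(1-t)$ are linear combinations of $h^{(j)}(1-t)$ whose coefficients satisfy the same recurrence in $k$, with the same initial data at $k=0$. This would reduce the comparison to matching recurrences.
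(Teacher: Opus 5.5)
Your Step 1 is the paper's Step 1. It consists of the expansion of $D^{i}$ in ordinary derivatives (which the paper quotes from \cite[Lemma 3.3]{PC:2026} rather than proving by induction), the isolation of the $l=k$ term, and your single sum $S_{k,j,r}$, which is exactly $B_{k,j,r}$ in \eqref{Bkj:expres}. Step 2 uses the same Egorychev machinery, but you run it in the opposite direction. The paper never collapses the double sum. In Lemma \ref{Bkjr:lemma} it starts from $B_{k,j,r}$ and splits $\frac{j}{2k-2l-j}\binom{2k-2l-j}{k-j-l}=\binom{2k-2l-j-1}{k-j-l}-\binom{2k-2l-j-1}{k-j-l-1}$, so that it becomes one contour integral carrying a factor $1-z$. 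It then sums $\sum_{l}\binom{k+l}{l}x^{l}=(1-x)^{-k-1}$ and applies $z\mapsto 1/z$, $z\mapsto 2z+1$, $w\mapsto(1+2z)/(1+w)$ to reach \eqref{Bkjr:int}. That integral has the same integrand as \eqref{Int1}, which Lemma \ref{ckjr:lemma} derives from the first form \eqref{ckjr:firstexp} of $C_{k,j,r}$. The double sum is nothing but the residue expansion of this common integral. Running this backwards is legitimate, but in your proposal the decisive identity is only outlined: the closed form of the $p$-sum is guessed and the Möbius substitution is not identified, and that is where all the work lies.

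More importantly, do not aim at the double sum exactly as printed in Proposition \ref{prop:mthm1}, because it is misprinted. In the final residue computation, the coefficient of $z^{q}$, $q=2l-2j-r-p\ge 0$, in $(2+z)^{-(r+j+1)}$ is $(-1)^{q}\binom{2l-j-p}{q}2^{-(2l-j-p+1)}$, and the printed formula drops $(-1)^{q}$. For $(k,j,r)=(2,1,1)$ the direct expansion is
\[
2^{2}[\mathcal{L}_{2}h](1-t)=12h(1-t)+\left(12t^{3}-28t^{2}+12t\right)\frac{h'(1-t)}{(1-t)^{3}}+4t^{2}\frac{h''(1-t)}{(1-t)^{2}}.
\]
It gives $-28$, in agreement with $B_{2,1,1}$ and with \eqref{ckjr:firstexp}. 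The printed double sum gives $-52$, and restoring $(-1)^{q}$ gives $-28$ again. Your planned check at $k=2$ would expose this, but no sign bookkeeping on your side can repair it.

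The polynomial that matters is the one Step 1 of Proposition \ref{prop:mthm1} actually produces and Theorem \ref{mth} uses, and its coefficients are \eqref{ckjr:firstexp}. So the sound target is to show that $B_{k,j,r}$ and \eqref{ckjr:firstexp} reduce to the same contour integral. With the sign restored, the inner $p$-sum equals $2^{2l-j+1}$ times the coefficient of $z^{2l-2j-r}$ in $(1+z)^{2k+l-j}(2+z)^{-(r+j+1)}$. That is the natural first step of your reversed argument.

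Finally, your concern about negative upper indices is moot. The upper index $2l-j-p$ exceeds the lower index $2l-2j-r-p$ by $r+j\ge 1$, so it is negative only when the lower index is, and such terms vanish.
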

    For $k=0$, the contribution of the summations in \eqref{expdDH} and \eqref{explkh} is assumed to be zero.
    
Once these results have been established, the proof of Theorem \ref{mth} follows as an immediate consequence.
\begin{proof}[Proof of Theorem \ref{mth}]
   Noting that the expressions inside the brackets on the right-hand sides of \eqref{expdDH} and \eqref{explkh} are the same, we get
    \begin{equation}
         \left[\frac{\D}{\D t}D^{2k}H^{L}_{k}\right](t)
			=\frac{(-1)^{k+1}k!4^{k}}{t^{2k}}   2^{k}[\mathcal{L}_{k}h](1-t).
    \end{equation}
    This completes the proof of the theorem.
\end{proof}
We now proceed to prove Propositions \ref{prop:mthm1} and \ref{prop:thm2}. It is straightforward to check that \eqref{expdDH} and \eqref{explkh} hold for $k=0$, so in the following proofs, we will focus on $k\geq 1$.
\begin{proof}[Proof of Proposition 
\ref{prop:mthm1}]
This proof will be carried out in the following two steps.
\begin{enumerate}
    \item     We have the integral $H^{L}_{k}(t)$ and thus $\left[\frac{\D}{\D t}D^{2k}H^{L}_{k}\right](t)$ in terms of $g$, but the range condition \eqref{cond:odd} is formulated in terms of $h$. Therefore, as a first step, we use Proposition \ref{prop1} together with \cite[Theorem 2.1]{PC:2026} to rewrite $ \left[\frac{\D}{\D t}D^{2k}H^{L}_{k}\right](t)$ in  terms of  $h$.
    \item In the second step, we simplify the resultant expression in the first step to obtain the desired expression in \eqref{expdDH}.
\end{enumerate}
\subsection*{Step 1: Reformulation of $\left[\frac{\D}{\D t}D^{2k}H^{L}_{k}\right](t)$ in terms of $h$}
Using Proposition \ref{prop1}, and \cite[Theorem 2.1]{PC:2026} in the expression of $\left[\frac{\D}{\D t}D^{2k}G_{k}\right](t)$, we get
\begin{multline}\label{Gkode}
		\left[\frac{\D}{\D t}D^{2k}H^{L}_{k}\right](t)=-\left[\frac{\D}{\D t}D^{2k}G_{k}\right](t)\\	=(-1)^{k+1}k!4^{k}\sum_{m=0}^{k}\left[\sum_{n=m}^{k}\left(\sum_{l=n}^{k}\frac{2^{l+m-n}(2k-l)!}{(k-l)!m!}\binom{l+1}{n+1}\binom{l+n-m}{n-m}\frac{1}{t^{l}}\right)\frac{(1-t)^{n+1}}{t^{n-m}}\right]g^{(m)}(1-t).
		\end{multline}
Applying the Leibniz rule to $g(t)=\dfrac{h(t)}{t^{2k+1}}$, we have 
\begin{align}
    g^{(m)}(1-t)=\sum_{j=0}^{m}(-1)^{m-j}\binom{m}{j}\frac{(m-j+2k)!}{(2k)!}\frac{1}{(1-t)^{m-j+2k+1}}h^{(j)}(1-t).
\end{align}
Substituting this expression of $g^{(m)}(1-t)$ in \eqref{Gkode}, multiplying both sides by $t^{2k}$, and changing the order of summations, we get
\begin{multline*}
     \frac{1}{(-1)^{k+1} k! 4^k}t^{2k} \left[\frac{\D}{\D t}D^{2k}H^{L}_{k}\right](t)\\
   =\sum_{j=0}^{k}\sum_{m=j}^{k}\sum_{n=m}^{k}\sum_{l=n}^{k}\frac{(-1)^{m-j}2^{l+m-n}(2k-l)!(m-j+2k)!}{(k-l)!(2k)!m!}\binom{m}{j}\binom{l+1}{n+1}\binom{l+n-m}{n-m}\frac{t^{2k+m-n-l}}{(1-t)^{m+2k-n-j}}\\\times h^{(j)}(1-t).
\end{multline*}
Replacing the index $l$ by $k-l$ and substituting $n-m=p$, we get
\begin{multline*}
   \frac{1}{(-1)^{k+1} k! 4^k}t^{2k} \left[\frac{\D}{\D t}D^{2k}H^{L}_{k}\right](t) \\ =\sum_{j=0}^{k}\sum_{m=j}^{k}\sum_{p=0}^{k-m}\sum_{l=0}^{k-m-p}\frac{(-1)^{m-j}2^{k-l-p}(k+l)!}{l!j!}\binom{m-j+2k}{2k}\binom{k-l+1}{m+p+1}\binom{k-l+p}{p}\frac{t^{k+l-p}}{(1-t)^{2k-p-j}}\times\\ h^{(j)}(1-t).
\end{multline*}
By changing the order of summations, we obtain
\begin{multline}\label{dD2kHL}
   \frac{1}{(-1)^{k+1} k! 4^k}t^{2k} \left[\frac{\D}{\D t}D^{2k}H^{L}_{k}\right](t) \\ =\sum_{j=0}^{k}\sum_{p=0}^{k-j}\sum_{l=0}^{k-p-j}(-1)^{j}2^{k-l-p}\frac{(k+l)!}{l!j!}\binom{k-l+p}{p}\sum_{m=j}^{k-p-l}(-1)^{m}\binom{m-j+2k}{2k}\binom{k-l+1}{m+p+1}\times\\\frac{t^{k+l-p}}{(1-t)^{-p}}\frac{h^{(j)}(1-t)}{(1-t)^{2k-j}}.
\end{multline}
We now consider the innermost summation and replace the index $m$ with $m-p-1$.
\begin{multline}\label{innsum:m}
     \sum_{m=p+1+j}^{k-l+1}(-1)^{m-p-1}\binom{m-p-1-j+2k}{2k}\binom{k-l+1}{m}=\\(-1)^{p+1}\sum_{m=0}^{k-l+1}(-1)^{m}\binom{m-p-1-j+2k}{2k}\binom{k-l+1}{m}
    =(-1)^{p+k-l}\binom{2k-j-1-p}{k+l-1}.
\end{multline}
Here we have used the fact that $\binom{m-p-1-j+2k}{2k}=0$ for $m<p+1+j$ in order to extend the limit, and the formula 
$$\sum_{r=0}^{n-s}(-1)^r\binom{a+r}{2n-s}\binom{n-s}{r}=(-1)^{n-s}\binom{a}{n},$$
from \cite[Lemma 3.1]{SRTrange_odd} in the last step, with $a=2k-j-1-p, n=k+l-1, s=2l-2$. Substituting the expression \eqref{innsum:m} in \eqref{dD2kHL}, we get
\begin{align*}
&\frac{1}{(-1)^{k+1} k! 4^k}t^{2k} \left[\frac{\D}{\D t}D^{2k}H^{L}_{k}\right](t) \\
     =&\sum_{j=0}^{k}\sum_{p=0}^{k-j}\sum_{l=0}^{k-p-j}\frac{(-1)^{j+p+k-l}2^{k-l-p}k!}{j!}\binom{k+l}{l}\binom{2k-j-1-p}{k+l-1}\binom{k-l+p}{p}t^{k+l-p}(1-t)^{p}\frac{h^j(1-t)}{(1-t)^{2k-j}}.
\end{align*}
Hence, we have
\begin{align}
   \left[\frac{\D}{\D t}D^{2k}H^{L}_{k}\right](t)= \frac{(-1)^{k+1} k! 4^k}{t^{2k}}\left(P_{k,0}(t)h(1-t)+\sum_{j=1}^{k}P_{k,j}(t)\frac{1}{(1-t)^{2k-j}} h^{(j)}(1-t)\right), 
\end{align}
where
\begin{align*}
    P_{k,0}(t)&:=\sum_{p=0}^{k}\sum_{l=0}^{k-p}(-1)^{p+k-l}2^{k-l-p}k!\binom{k+l}{l}\binom{2k-1-p}{k+l-1}\binom{k-l+p}{p}\frac{t^{k+l-p}(1-t)^{p}}{(1-t)^{2k}},\, \text{and}\\
    P_{k,j}(t)&:=\sum_{p=0}^{k-j}\sum_{l=0}^{k-p-j}(-1)^{j+p+k-l}2^{k-l-p}\frac{k!}{j!}\binom{k+l}{l}\binom{2k-j-1-p}{k+l-1}\binom{k-l+p}{p}t^{k+l-p}(1-t)^{p},~~ j\geq 1.
\end{align*}
\subsection*{Step 2: Simplification of $P_{k,0}(t)$ and $P_{k,j}(t)$}
\textbf{Expression of $P_{k,0}(t)$:}
Replacing the index $l$ with $k-l$, we get
\[
P_{k,0}(t)= \sum_{p=0}^{k}\sum_{l=p}^{k}(-1)^{p+l}2^{l-p}k!\binom{2k-l}{k}\binom{2k-1-p}{2k-l-1}\binom{l+p}{p}\frac{t^{2k-l-p}(1-t)^{p}}{(1-t)^{2k}}.
\]
Renaming the indices $p$ as $r$, and $l$ as $p$, we have
\[
P_{k,0}(t)= \sum_{r=0}^{k}\sum_{p=r}^{k}(-1)^{r+p}2^{p-r}k!\binom{2k-p}{k}\binom{2k-1-r}{2k-p-1}\binom{p+r}{r}t^{2k-p-r}(1-t)^{r}\frac{1}{(1-t)^{2k}}.
\]
Let us write $-t$ as $y$. This gives
\begin{align*}
    \frac{P_{k,0}(t)}{k!}&= \frac{y^{2k}}{(1+y)^{2k}} \sum_{r=0}^{k}\sum_{p=r}^{k}2^{p-r}\binom{2k-p}{k}\binom{2k-1-r}{2k-p-1}\binom{p+r}{r}\frac{(1+y)^{r}}{y^{p+r}}.
\end{align*}
Note that this sum is exactly the same as the summation $S_1$ in \cite[page 23]{SRTrange_odd} for $u=s=0$, except for the limits of $p$ and $r$. For the reader’s convenience, we briefly reproduce the relevant steps from that article. 
We can write $P_{k,0}(t)$ in terms of a contour integral as follows:
\begin{align*}
&\frac{P_{k,0}(t)(1+y)^{2k}}{k!y^{2k}}= \sum\limits_{r=0}^{k}\sum\limits_{p=r}^{k}\frac{1}{(2\pi\I)^3}\\
&\times\int\limits_{|z|=\ve_1}\int\limits_{|w|=\ve_2}\int\limits_{|v|=\ve_3}\!\!\! 2^{p-r}\frac{1}{(1-z)^{k+1} z^{k-p+1}}\frac{1}{(1-w)^{p+1}w^{r+1}} \frac{1}{(1-v)^{2k-p} v^{-r+p+1}}\frac{(1+y)^{r}}{y^{p+r}}\D z \D w \D v.
\end{align*}
We adopt the same choice of contours as in \cite{SRTrange_odd}. Note that for $p<r$, the integral in $v$ vanishes by Cauchy's theorem. Similarly for $r>k$, the integral in $v$ is 0, since $p\leq k$. Thus, we can  extend the lower and upper limits of $r$ to $0$ and $\infty$, respectively. We can also extend the upper limit of $p$ to $\infty$ as the integral in $z$ vanishes for $p>k$. Making the change of variable $z(1-z)=\eta$ as done in the aforementioned paper, we obtain
\begin{align*}
    \frac{P_{k,0}(t)}{k!}&=\frac{y^{2k}}{(1+y)^{2k}} \frac{y(1+y)^{2k}}{2\pi\I}\int \frac{1}{\eta^{k+1}}\frac{1}{(\sqrt{1-4\eta}+\sqrt{y^2-4\eta})^{2k}}\frac{1}{\sqrt{y^2-4\eta}\sqrt{1-4\eta}} \D \eta\\
    &=\frac{y^{2k+1}}{2\pi\I} \int \frac{1}{\eta^{k+1}}\frac{1}{(\sqrt{1-4\eta}+\sqrt{y^2-4\eta})^{2k}}\frac{1}{\sqrt{y^2-4\eta}\sqrt{1-4\eta}} \D \eta.
\end{align*}
For ease of notation, we write $\A = \sqrt{1-4\eta},\,\,\B=\sqrt{t^2-4\eta}$.
\[
\frac{P_{k,0}(t)}{k!}= \frac{y^{2k+1}}{2\pi\I} \int \frac{1}{\eta^{k+1}}\frac{1}{(\A+\B)^{2k}}\frac{1}{\A\B} \D \eta.
\]
We next make the change of variable $\A + \B=\delta$ (see \cite[page 25]{SRTrange_odd} for more details). We have
\begin{align*}
   \frac{P_{k,0}(t)}{k!}&=-\frac{2^{4k+3}y^{2k+1}}{2\pi\I}\int \frac{\lb (1-(\delta-y)^2)((\delta+y)^2-1)\rb^{-k-1}}{\delta^{-1}}\D \delta\\
    &=(-1)^{k} \frac{2^{4k+3}y^{2k+1}}{2\pi\I}\int \frac{\delta}{\lb(\delta^2-(y+1)^2)(\delta^2-(y-1)^2)\rb^{k+1}}\D \delta.
\end{align*}
Making another change of variable $\delta^2-(y+1)^2=\B$, we finally have
\begin{align*}
      \frac{P_{k,0}(t)}{k!}&=(-1)^{k} \frac{2^{4k+2}y^{2k+1}}{2\pi\I}  \int \frac{1}{\lb\B(\B+4y)\rb^{k+1}}\D \B=(-1)^{k} \frac{(4y)^{k}}{2\pi\I}  \int \frac{1}{\lb\B\lb\frac{\B}{4y}+1\rb\rb^{k+1}}\D \B\\
    &=(-1)^{k} \frac{(4y)^{k}}{2\pi\I}\int\frac{1}{{\B}^{k+1}}\sum\limits_{p\geq 0} {k+p\choose p} \frac{(-\B)^{p}}{(4y)^{p}} \D \B
    =\binom{2k}{k}.
\end{align*}
Hence, $P_{k,0}(t)=\frac{(2k)!}{k!}$.\\

\medskip
\noindent\textbf{Expression of $P_{k,j}(t)$:}
We have
\begin{align*}
      P_{k,j}(t)&:=\sum_{p=0}^{k-j}\sum_{l=0}^{k-p-j}(-1)^{j+p+k-l}2^{k-l-p}\frac{k!}{j!}\binom{k+l}{l}\binom{2k-j-1-p}{k+l-1}\binom{k-l+p}{p}t^{k+l-p}(1-t)^{p}.
\end{align*}
By expanding $(1-t)^p$, we get
\begin{align*}
      &P_{k,j}(t)\\&=\sum_{p=0}^{k-j}\sum_{l=0}^{k-p-j}\sum_{r=0}^{p}(-1)^{j-r+k-l}2^{k-l-p}\frac{k!}{j!}\binom{k+l}{l}\binom{2k-j-1-p}{k+l-1}\binom{k-l+p}{p}\binom{p}{r}t^{k+l-r}\\
      &=\sum_{p=0}^{k-j}\sum_{l=0}^{k-p-j}\sum_{r=k-j-l}^{k-j-l+p}(-1)^{r}2^{k-l-p}\frac{k!}{j!}\binom{k+l}{l}\binom{2k-j-1-p}{k+l-1}\binom{k-l+p}{p}\binom{p}{r+j-k+l}t^{2k-j-r},
\end{align*}
where the last step follows from the change of index $r\to r+j-k+l$. Noting that $\binom{p}{r+j-k+l}=0$ for $r<k-j-l$ and for $r>k-j-l+p$, we may extend the range of summation in $r$ to $0\leq r\leq 2k-2j$. Then, by changing the order of summation, we obtain
\begin{align*}  
   P_{k,j}(t)
    &=\sum_{r=0}^{2k-2j}\sum_{p=0}^{k-j}\sum_{l=0}^{k-p-j}(-1)^{r}2^{k-l-p}\frac{k!}{j!}\binom{k+l}{l}\binom{2k-j-1-p}{k+l-1}\binom{k-l+p}{p}\binom{p}{r+j-k+l}t^{2k-j-r}\\
    &=\sum\limits_{r=0}^{2k-2j}C_{k,j,r}t^{2k-j-r},
\end{align*}
where the coefficients $C_{k,j,r}$ are given by
\begin{equation}\label{ckjr:firstexp}
   C_{k,j,r}:= (-1)^{r}\frac{k!}{j!}\sum_{l=0}^{k-j}2^{k-l}\binom{k+l}{l}\sum_{p=0}^{k-j-l}2^{-p}\binom{2k-j-p-1}{k+l-1}\binom{k-l+p}{p}\binom{p}{r+j-k+l}.
\end{equation}
Utilizing Lemma \ref{ckjr:lemma}, we obtain the following expression of $C_{k,j,r}$:
\begin{equation}
    C_{k,j,r} 
   =(-1)^{r}\frac{k!2^{j}}{j!}\sum_{l=0}^{r+j}\sum_{p=0}^{2k+l-j}2^{p}\binom{r+j}{l}\binom{2k-l}{k-l}\binom{2k+l-j}{p}\binom{2l-j-p}{2l-2j-r-p}.
\end{equation}
This completes the proof of Proposition \ref{prop:mthm1}.
    
\end{proof}
\begin{proof}[Proof of Proposition \ref{prop:thm2}]
The proof mainly involves the following two steps:
\begin{enumerate}
    \item Since $\mathcal{L}_{k}$ is originally written in terms of $D=\frac{1}{t}\frac{\mathrm{d}}{\mathrm{d}t}$, our first step is to recast it in terms of the usual derivatives using \cite[Lemma 3.3]{PC:2026}. 
    \item In the second step, we simplify the resultant expression in the first step to obtain the desired expression in \eqref{explkh}.
\end{enumerate}
\subsection*{Step 1: Reformulation of the operator $\mathcal{L}_{k}$ in terms of ordinary derivatives}
Following is the relation between $D$-derivatives and ordinary derivatives from \cite[Lemma 3.3]{PC:2026}:
\begin{equation}
    D^{r}=\sum_{j=1}^{r}(-1)^{r-j}\frac{(2r-1-j)!}{2^{r-j}(r-j)!(j-1)!}\frac{1}{t^{2r-j}}\frac{\D^{j}}{\D t^{j}},\quad r\in \mathbb{N}.
\end{equation}
Using this along with \eqref{lkh:operator}, and changing the order of summations, we get
    \begin{align}
        \mathcal{L}_k(h(t))&=\sum_{l=0}^k \frac{(k+l)!}{(k-l)!l!2^l}(1-t)^{k-l} D^{k-l}(h(t))\nonumber\\
        &=\frac{(2k)!}{k!2^{k}}h(t)+\sum_{j=1}^{k}\sum_{l=0}^{k-j}\left((-1)^{k-l-j}\frac{(k+l)!(2k-2l-1-j)!}{2^{k-j}(k-l)!l!(k-l-j)!(j-1)!}\frac{(1-t)^{k-l}}{t^{2k-2l-j}}\right)h^{(j)}(t).
    \end{align}
    This gives
        \begin{align*}
        &2^{k}[\mathcal{L}_kh](1-t)\\
        &=\frac{(2k)!}{k!}h(1-t)+\sum_{j=1}^{k}\sum_{l=0}^{k-j}\frac{(-1)^{k-l-j}2^{j}(k+l)!(2k-2l-1-j)!}{(k-l)!l!(k-l-j)!(j-1)!}\frac{t^{k-l}}{(1-t)^{-2l}}\frac{h^{(j)}(1-t)}{(1-t)^{2k-j}}\\
         &=\frac{(2k)!}{k!}h(1-t)+\sum_{j=1}^{k}\sum_{l=0}^{k-j}\frac{(-1)^{k-l-j}k!2^j}{(2k-2l-j)(j-1)!}\binom{k+l}{l}\binom{2k-2l-j}{k-l}t^{k-l}(1-t)^{2l}\frac{h^{(j)}(1-t)}{(1-t)^{2k-j}}\\
          &=\underbrace{\frac{(2k)!}{k!}}_{Q_{k,0}(t)}h(1-t)+\sum_{j=1}^{k}\underbrace{\sum_{l=0}^{k-j}\sum_{r=0}^{2l}\frac{(-1)^{k-l-j-r}k!2^j}{(2k-2l-j)(j-1)!}\binom{k+l}{l}\binom{2k-2l-j}{k-l}\binom{2l}{r}t^{k+l-r}}_{Q_{k,j}(t)}\frac{h^{(j)}(1-t)}{(1-t)^{2k-j}}\\
          &=Q_{k,0}(t)h(1-t)+\sum_{j=1}^{k}Q_{k,j}(t)\frac{h^{(j)}(1-t)}{(1-t)^{2k-j}}.
    \end{align*}
   Clearly, $ Q_{k,0}(t)=\frac{(2k)!}{k!}=P_{k,0}(t)$. Now our goal is to show that $Q_{k,j}(t)=P_{k,j}(t)$ for $j\geq 1$.
    \subsection*{Step 2: Simplification of coefficients  $Q_{k,j}(t)$}
    Replacing $r$ with $r+j-k+l$, we get
    \begin{align}
      \!\!\!\!Q_{k,j}(t)&=\sum_{l=0}^{k-j}\sum_{r=k-j-l}^{k+l-j}\frac{(-1)^{r}k!2^j}{(2k-2l-j)(j-1)!}\binom{k+l}{l}\binom{2k-2l-j}{k-l}\binom{2l}{r+j-k+l}t^{2k-j-r}\\
      &=\sum_{r=0}^{2k-2j}\underbrace{\sum_{l=0}^{k-j}\frac{(-1)^{r}k!2^j}{(2k-2l-j)(j-1)!}\binom{k+l}{l}\binom{2k-2l-j}{k-l}\binom{2l}{r+j-k+l}}_{B_{k,j,r}}t^{2k-j-r}\\
      &=\sum_{r=0}^{2k-2j}B_{k,j,r}t^{2k-j-r}.\label{Bkj:expres}
    \end{align}
    The goal of proving $Q_{k,j}(t)=P_{k,j}(t)$ now boils down to showing that $B_{k,j,r}=C_{k,j,r}$ which is proved in Lemma \ref{Bkjr:lemma}.
    Thus we get
\begin{align*}
    2^{k}[\mathcal{L}_{k}h](1-t)=\left(\frac{(2k)!}{k!}h(1-t)+\sum_{j=1}^{k}P_{k,j}(t)\frac{1}{(1-t)^{2k-j}}h^{(j)}(1-t)\right),
\end{align*}
where $P_{k,j}(t)=\sum\limits_{r=0}^{2k-2j}C_{k,j,r}t^{2k-j-r}$.
    This completes the proof of Proposition \ref{prop:thm2}.
\end{proof}
Invoking Propositions \ref{prop:mthm1} and \ref{prop:thm2}, the proof of Theorem \ref{mth} follows as outlined earlier in this section. Furthermore, the case of general functions follows immediately, since it is simply the $(m+k)$ analogue of the radial case.
\begin{proof}[Proof of Theorem \ref{mtheorem2}]
    For $n=2k+3$, $\alpha=\frac{n-2}{2}$, and $\phi_{m,l}(t)=t^{2(m+k)+1}\Phi_{m,l}(t)$, the relation \eqref{rangecond:even:general} can be rewritten as

\begin{align*}
    H^{L}_{m+k}(t):=& \int_0^{1-t} u\Phi_{m,l}(u)\left\{\left[(1+u)^2-t^2\right]\left[(1-u)^2-t^2\right]\right\}^{m+k} \mathrm{d} u\\  =&\int_{1+t}^2 u\Phi_{m,l}(u)\left\{\left[(1+u)^2-t^2\right]\left[(1-u)^2-t^2\right]\right\}^{m+k} \mathrm{d} u=:H_{m+k}^{R}(t).
\end{align*}
Accordingly, Theorem \ref{mtheorem2} may be restated in the following form:
\begin{equation}
    H_{m+k}^{L}(t)=H_{m+k}^{R}(t)\quad \text{if and only if }\quad \left[\mathcal{L}_{m+k} \phi_{m,l}\right](1-t)=\left[\mathcal{L}_{m+k} \phi_{m,l}\right](1+t).
\end{equation}
Therefore, proving Theorem \ref{mtheorem2} reduces to showing the $(m+k)$ analogue of the radial case; see \eqref{mtheorem1Leq}.  This completes the proof.
\end{proof}
We now proceed to prove Theorems \ref{mainthm1} and \ref{mainthm2}.
\begin{proof}[Proof of Theorem \ref{mainthm1}]
We adopt the notations used in the proof of \cite[Theorem 2.1]{PC:2026},
\begin{align*}
    G_{k}(t):=h_{k}(t)=\int_{1-t}^{1}uf(u)[Q(t,u)]^{k}\D u\quad \text{and}\quad H_{k}^{L}(t):=\int_{0}^{1-t}uf(u)[Q(t,u)]^{k}\D u,
\end{align*}
and set $F(t)=t^{n-2}f(t)=t^{2k+1}f(t)$.
Using Proposition \ref{prop1} and Theorem \ref{mth}, we have
\begin{align}
    \left[\frac{d}{dt}D^{2k}h_{k}\right](t)=    \left[\frac{d}{dt}D^{2k}G_{k}\right](t)=-\left[\frac{d}{dt}D^{2k}H_{k}\right](t)&=\frac{(-1)^{k}k!4^{k}2^{k}}{t^{2k}}   [\mathcal{L}_{k}F](1-t)\\
    &=\frac{(-1)^{k}k!4^{k}2^{k}}{t^{2k}}   [\mathcal{L}_{k}(t^{2k+1}f)](1-t).
\end{align}
This completes the proof.
\end{proof}
\begin{proof}[Proof of Theorem \ref{mainthm2}] We recall the following expressions from the proof of \cite[Theorem 2.3]{PC:2026}:
\begin{equation}\label{phiqs}
    \phi_{m,l}(t)=\int_{1-t}^1 u \widetilde{f}_{m, l}(u)[Q(t,u)]^{m+k} \mathrm{~d} u \quad \text{and}\quad h_{m,l}(t)=\frac{k!}{ 2^{m}4^{m+k}(m+k)!} [D^{m}\phi_{m,l}](t).
\end{equation}
Using an argument similar to that in the proof of Theorem \ref{mainthm1}, we obtain
\begin{equation}
     \left[\frac{d}{dt}D^{2m+2k}\phi_{m,l}\right](t)=\frac{(-1)^{m+k}(m+k)!4^{m+k}2^{m+k}}{t^{2(m+k)}}   [\mathcal{L}_{m+k}(t^{2(m+k)+1}\widetilde{f}_{m, l})](1-t).
\end{equation}
    Using \eqref{phiqs}, we obtain
    \begin{equation}
     \frac{ 2^{m}4^{m+k}(m+k)!}{k!}\left[\frac{d}{dt}D^{m+2k}h_{m,l}\right](t)=\frac{(-1)^{m+k}(m+k)!4^{m+k}2^{m+k}}{t^{2(m+k)}}   [\mathcal{L}_{m+k}(t^{2(m+k)+1}\widetilde{f}_{m, l})](1-t),
\end{equation}
that is
 \begin{equation}
  \left[\frac{d}{dt}D^{m+2k}h_{m,l}\right](t)=\frac{(-1)^{m+k}k!2^{k}}{t^{2(m+k)}}   [\mathcal{L}_{m+k}(t^{2(m+k)+1}\widetilde{f}_{m, l})](1-t),
\end{equation}
This completes the proof.
\end{proof}
\section*{\textbf{Appendix}}\label{sec:appendix}
In this section, we establish some combinatorial lemmas that are used in the proof of our main results. In particular, the following lemma plays a key role in the proof of Proposition \ref{prop:mthm1}.
\begin{lemma}\label{ckjr:lemma}
 For $j,k\in\mathbb{N}$ such that $1\leq j\leq k$ and $r\in\mathbb{N}_{0}$ satisfies $0\leq r\leq 2k-2j$, the  expression of $C_{k,j,r}$ in \eqref{ckjr:firstexp} can be simplified to the following form:
    \begin{equation}\label{Ckjr:lemma:exp}
    C_{k,j,r} 
   =(-1)^{r}\frac{k!2^{j}}{j!}\sum_{l=0}^{r+j}\sum_{p=0}^{2k+l-j}2^{p}\binom{r+j}{l}\binom{2k-l}{k-l}\binom{2k+l-j}{p}\binom{2l-j-p}{2l-2j-r-p}.
\end{equation}
\end{lemma}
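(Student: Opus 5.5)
We will prove the identity with the Egorychev contour-integral method from the preliminaries, the same way $P_{k,0}(t)$ was evaluated above. The idea is to write every binomial coefficient in \eqref{ckjr:firstexp} as a residue, carry out the finite sums over $p$ and $l$ as geometric-type series inside the integral, and then re-expand the resulting rational integrand in a different order so that the four binomials of \eqref{Ckjr:lemma:exp} can be read off.

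\textbf{Step 1: integral representation.} Starting from \eqref{ckjr:firstexp}, we use the following representations:
\begin{align*}
\binom{p}{r+j-k+l}&=\frac{1}{2\pi i}\int_{|w|=\epsilon_1}\frac{(1+w)^{p}}{w^{r+j-k+l+1}}\,\D w,\\
\binom{2k-j-p-1}{k+l-1}&=\frac{1}{2\pi i}\int_{|v|=\epsilon_2}\frac{(1+v)^{2k-j-p-1}}{v^{k+l}}\,\D v,\\
\binom{k-l+p}{p}&=\frac{1}{2\pi i}\int_{|z|=\epsilon_3}\frac{\D z}{(1-z)^{k-l+1}z^{p+1}}.
\end{align*}
The factor $\binom{k+l}{l}$ is kept as a coefficient for now, or represented as $\frac{1}{2\pi i}\int (1-s)^{-k-1}s^{-l-1}\,\D s$.

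\textbf{Step 2: extend the ranges and sum.} First we check, case by case, that each integral vanishes outside the original ranges. For $p>k-j-l$ the $v$-integral is zero, and for $p<r+j-k+l$ the $w$-integral is zero. This lets us extend $p$ and then $l$ to $[0,\infty)$. The sum over $p$ is then geometric:
\[
\sum_{p\ge 0}\Bigl(\frac{1+w}{2z(1+v)}\Bigr)^{p}.
\]
The sum over $l$ is also geometric in $\frac{(1-z)}{2wv}$, or in $s$. To make these converge we choose radii with $\epsilon_3$ large relative to $\epsilon_1$ and small relative to $1$, in the style of \cite{SRTrange_odd}. This gives a single closed rational integrand in $(z,v,w)$.

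\textbf{Step 3: re-expand to match the target.} Next we evaluate one variable by residues, most naturally $v$ or $z$, at the pole created by the geometric sum. We then substitute so that the remaining integrand factors as
\[
(1+w)^{r+j}\cdot(\text{kernel of }\tbinom{2k-l}{k-l})\cdot(1+2\,\cdot)^{2k+l-j}\cdot(\text{kernel of }\tbinom{2l-j-p}{2l-2j-r-p}).
\]
Expanding $(1+w)^{r+j}$ binomially produces the new index $l$ with weight $\binom{r+j}{l}$. Expanding $(1+2x)^{2k+l-j}$ produces $p$ with weight $2^{p}\binom{2k+l-j}{p}$. The overall prefactor $2^{j}$ should come out of the powers $2^{k-l-p}$ after the residue evaluation.

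\textbf{Main obstacle.} We expect the hardest part to be the correct handling of $\binom{2l-j-p}{2l-2j-r-p}$ in the final form. Its upper index can be negative, and it must be interpreted through the contour representation $\frac{1}{2\pi i}\int(1+x)^{2l-j-p}x^{-(2l-2j-r-p)-1}\,\D x$. This means the last re-expansion has to be arranged so that this residue appears literally, rather than a symmetric-identity rewrite of it, which could fail for negative arguments.

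\textbf{Fallback.} If the direct variable bookkeeping gets unwieldy, we would instead show that both expressions have the same generating function $\sum_{r}C_{k,j,r}x^{r}$. We would compute each side as a single contour integral in an auxiliary variable and compare the two closed forms. Finally, we would check small cases such as $k=1,2$ by hand to fix signs and the power of $2$.
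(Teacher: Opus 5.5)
Your plan follows the paper's outline (Egorychev kernels, resumming inside the integral, reading off residues), but as written it does not go through. In Step 2, the claim that the $v$-integral vanishes for $p>k-j-l$ is false. The integral $\frac{1}{2\pi i}\oint(1+v)^{2k-j-p-1}v^{-(k+l)}\,\D v$ is the generalized binomial $\binom{2k-j-p-1}{k+l-1}=(-1)^{k+l-1}\binom{p+j-k+l-1}{k+l-1}$, and this is nonzero once $p\ge 2k-j$. So extending $p$ (and then $l$) to $\infty$ adds nonzero terms, and your geometric series no longer computes $C_{k,j,r}$.

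The paper avoids this with two preliminary rewrites. It replaces $\binom{2k-j-p-1}{k+l-1}$ by $\binom{2k-j-p-1}{k-j-l-p}$, whose kernel $(1+z)^{2k-j-1-p}z^{p-k+j+l-1}$ is holomorphic at $0$ whenever $p>k-j-l$ or $l>k-j$. It also decouples $\binom{k-l+p}{p}\binom{p}{r+j-k+l}=\binom{k-l+p}{r+j}\binom{r+j}{k-l}$ before summing.

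Your Step 3 only describes the shape you hope to reach. The actual mechanism has three stages:
\begin{enumerate}
\item The $p$-sum, the inversion $z\mapsto 1/z$ and a residue in $w$ reduce the inner sum to one $z$-integral of $\frac{(1+z)^{3k-j-l}}{z^{k+l}(1+2z)^{r+j+1}}$.
\item Represent $\binom{r+j}{k-l}$ by a $w$-integral. The $l$-sum $\sum_l\binom{k+l}{l}\bigl(\frac{w}{4z(1+z)}\bigr)^l$ then produces the kernel $(4z(1+z)-w)^{-k-1}$, whose residue at $w=0$ is $\sum_l\binom{r+j}{l}\binom{2k-l}{k-l}(4z(1+z))^{-(2k-l+1)}$.
\item The $p$-sum of the target is the residue at $z=\infty$ of $\frac{(1+z)^{2k+l-j}}{(1+2z)^{r+j+1}z^{2k-l}}$.
\end{enumerate}

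Your ``main obstacle'' is misplaced, and the real difficulty is that no argument can reach the printed target. In every nonzero term the upper index of $\binom{2l-j-p}{2l-2j-r-p}$ exceeds the lower one by $r+j\ge 1$, so negative upper indices never occur. What matters is the sign. That binomial arises as the coefficient of $z^{q}$, with $q=2l-2j-r-p$, in $(2+z)^{-(r+j+1)}$, and this coefficient equals $(-1)^{q}\binom{r+j+q}{q}2^{-(r+j+1+q)}$. The paper drops $(-1)^{q}$, so \eqref{Ckjr:lemma:exp} as printed is false.

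A concrete check at $(k,j,r)=(2,1,1)$:
\begin{itemize}
\item \eqref{ckjr:firstexp} gives $C_{2,1,1}=-2\cdot 4\cdot(2+\tfrac32)=-28$. This agrees with expanding $4[\mathcal{L}_2h](1-t)$ directly, which yields $P_{2,1}(t)=12t-28t^2+12t^3$.
\item The right side of \eqref{Ckjr:lemma:exp} gives $-4(3+10)=-52$.
\item The correct closed form has $(-2)^{p}$ in place of $(-1)^{r}2^{p}$, which gives $4(3-10)=-28$.
\end{itemize}

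Your fallback of checking small cases would catch this at $(2,1,1)$; $k=1$ cannot detect it, since only $q=0$ occurs there. So you should aim for the corrected identity. The slip does not harm the paper's main results: Lemma \ref{Bkjr:lemma} reduces $B_{k,j,r}$ to exactly the same integral, so $B_{k,j,r}=C_{k,j,r}$ still holds.
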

\begin{proof}
 Consider $C_{k,j,r}$ given in \eqref{ckjr:firstexp}:
 \begin{equation}
      C_{k,j,r}= (-1)^{r}\frac{k!}{j!}\sum_{l=0}^{k-j}2^{k-l}\binom{k+l}{l}\sum_{p=0}^{k-j-l}2^{-p}\binom{2k-j-p-1}{k+l-1}\binom{k-l+p}{p}\binom{p}{r+j-k+l}.
 \end{equation}
Using $\binom{a}{b}\binom{b}{c}=\binom{a}{c}\binom{a-c}{b-c}$, we have
\begin{equation}
    \binom{k-l+p}{p}\binom{p}{p-r-j+k-l}=\binom{k-l+p}{p-r-j+k-l}\binom{r+j}{r+j-k+l}=\binom{k-l+p}{r+j}\binom{r+j}{k-l}.
\end{equation}
This gives
\begin{align}\label{Ckjr:ex}
    C_{k,j,r}
        &=(-1)^{r}\frac{k!}{j!}\sum_{l=0}^{k-j}2^{k-l}\binom{k+l}{l}\binom{r+j}{k-l}\left\{\underbrace{\sum_{p=0}^{k-j-l}2^{-p}\binom{2k-j-p-1}{k-j-l-p}\binom{k-l+p}{r+j}}_{S}\right\}.
\end{align}
Note that the combinatorial factor $ \binom{2k-j-p-1}{k-j-l-p}=0\text{ if either }p>k-j-l ~~ \text{ or } l>k-j~~ \&~~p\geq0.$ Therefore, we can extend the upper limits of $p$ and $l$ to infinity. 
Let us now consider the inner sum $S$. This can be written as a contour integral as follows.
\begin{multline}
     S=\sum_{p=0}^{\infty}2^{-p}\binom{2k-j-p-1}{k-j-l-p}\binom{k-l+p}{r+j}=\\\frac{1}{(2\pi i)^{2}}\int_{|z|=\epsilon_{1}}\frac{(1+z)^{2k-j-1}}{z^{k-j-l+1}}\int_{|w|=\gamma_{1}}\frac{(1+w)^{k-l}}{w^{r+j+1}}\sum_{p=0}^{\infty}\frac{z^{p}(1+w)^{p}}{2^{p}(1+z)^{p}}dwdz.
\end{multline}
If we choose $\epsilon_{1}<\frac{2}{3+\gamma_{1}}$, then 
the series $\sum\limits_{p=0}^{\infty}\frac{z^{p}(1+w)^{p}}{2^{p}(1+z)^{p}}$ converges, and hence we get
\begin{align*}
 S
     &=\frac{1}{(2\pi i)^{2}}\int_{|z|=\epsilon_{1}}\frac{(1+z)^{2k-j-1}}{z^{k-j-l+1}}\int_{|w|=\gamma_{1}}\frac{(1+w)^{k-l}}{w^{r+j+1}}\frac{2(1+z)}{\left(2+z-zw\right)}dwdz.
\end{align*}
By substituting $z$ with $\frac{1}{z}$, we get
\begin{align*}
  S
     &=-\frac{1}{(2\pi i)^{2}}\int_{|z|=\frac{1}{\epsilon_{1}}}\frac{(1+z)^{2k-j-1}z^{k-j-l+1}}{z^{2k-j-1}}\int_{|w|=\gamma_{1}}\frac{(1+w)^{k-l}}{w^{r+j+1}}\frac{2(1+z)z}{z\left(2z+1-w\right)}\frac{1}{z^2}dwdz\\
     &=-\frac{2}{(2\pi i)^{2}}\int_{|z|=\frac{1}{\epsilon_{1}}}\frac{(1+z)^{2k-j}}{z^{k+l}(1+2z)}\int_{|w|=\gamma_{1}}\frac{(1+w)^{k-l}}{w^{r+j+1}}\frac{1}{\left(1-\frac{w}{1+2z}\right)}dwdz,
\end{align*}
where $|z|=\frac{1}{\epsilon_{1}}$ is negatively (clockwise) oriented. Noting that $ \left|\frac{w}{1+2z}\right|<1$, we have
\begin{align*}
   S
     &=-\frac{2}{(2\pi i)^{2}}\int_{|z|=\frac{1}{\epsilon_{1}}}\frac{(1+z)^{2k-j}}{z^{k+l}(1+2z)}\int_{|w|=\gamma_{1}}\frac{1}{w^{r+j+1}}\sum_{s=0}^{k-l}\sum_{p=0}^{\infty}\binom{k-l}{s}w^{s}\frac{w^p}{(1+2z)^p}dwdz.
\end{align*}
Since $w=0$ is the only singularity inside the contour $|w|=\gamma_{1}$, we compute the residue by choosing $p=r+j-s$, and hence we get
\begin{align}
   S
      &=-\frac{2}{2\pi i}\int_{|z|=\frac{1}{\epsilon_{1}}}\frac{(1+z)^{2k-j}}{z^{k+l}(1+2z)}\sum_{s=0}^{k-l}\binom{k-l}{s}\frac{1}{(1+2z)^{r+j-s}}dz\nonumber\\
      &=-\frac{2}{2\pi i}\int_{|z|=\frac{1}{\epsilon_{1}}}\frac{(1+z)^{2k-j}}{z^{k+l}(1+2z)^{r+j+1}}\sum_{s=0}^{k-l}\binom{k-l}{s}(1+2z)^{s}dz\nonumber\\
      &=-\frac{2}{2\pi i}\int_{|z|=\frac{1}{\epsilon_{1}}}\frac{(1+z)^{2k-j}}{z^{k+l}(1+2z)^{r+j+1}}(2+2z)^{k-l}dz\nonumber\\
       &=\frac{-2^{k-l+1}}{2\pi i}\int_{|z|=\frac{1}{\epsilon_{1}}}\frac{(1+z)^{3k-j-l}}{z^{k+l}(1+2z)^{r+j+1}}dz.\label{S:innersum}
\end{align}
Substituting the above expression into \eqref{Ckjr:ex}, we get
\begin{align*}
(-1)^{r}\frac{j!}{k!}C_{k,j,r}
    &=-\sum_{l=0}^{\infty}2^{k-l}\binom{k+l}{l}\binom{r+j}{k-l}\frac{2^{k-l+1}}{2\pi i}\int_{|z|=\frac{1}{\epsilon_{1}}}\frac{(1+z)^{3k-j-l}}{z^{k+l}(1+2z)^{r+j+1}}dz\\
     &=-\sum_{l=0}^{\infty}\binom{k+l}{l}\frac{2^{2k-2l+1}}{(2\pi i)^{2}}\int_{|w|=\widetilde{\gamma}_{1}}\frac{(1+w)^{r+j}}{w^{k-l+1}}\int_{|z|=\frac{1}{\epsilon_{1}}}\frac{(1+z)^{3k-j-l}}{z^{k+l}(1+2z)^{r+j+1}}dzdw\\
&=-\frac{2^{2k+1}}{(2\pi i)^{2}}\int_{|w|=\widetilde{\gamma}_{1}}\frac{(1+w)^{r+j}}{w^{k+1}}\int_{|z|=\frac{1}{\epsilon_{1}}}\frac{(1+z)^{3k-j}}{z^{k}(1+2z)^{r+j+1}}\sum_{l=0}^{\infty}\binom{k+l}{l}\left(\frac{w}{4z(1+z)}\right)^{l}dzdw.
\end{align*}
If we choose $\widetilde{\gamma}_{1}<4\dfrac{(1-\epsilon_{1})}{\epsilon_{1}^{2}}$, then $ \left|\frac{w}{4z(1+z)}\right|<1$ and consequently, we obtain
\begin{align*}
   (-1)^{r}\frac{j!}{k!}C_{k,j,r}
    &=-\frac{2^{2k+1}}{(2\pi i)^{2}}\int_{|w|=\widetilde{\gamma}_{1}}\frac{(1+w)^{r+j}}{w^{k+1}}\int_{|z|=\frac{1}{\epsilon_{1}}}\frac{(1+z)^{3k-j}}{z^{k}(1+2z)^{r+j+1}}\frac{1}{\left(1-\frac{w}{4z(1+z)}\right)^{k+1}}dzdw\\
&=-\frac{2^{4k+3}}{(2\pi i)^{2}}\int_{|w|=\widetilde{\gamma}_{1}}\frac{(1+w)^{r+j}}{w^{k+1}}\int_{|z|=\frac{1}{\epsilon_{1}}}\frac{(1+z)^{4k-j+1}z}{(1+2z)^{r+j+1}}\frac{1}{(4z(1+z)-w)^{k+1}}dzdw.
\end{align*}
Hence, we get
\begin{multline}\label{Int1}
 C_{k,j,r} =(-1)^{r+1}\frac{k!2^{4k+3}}{j!2\pi i}\int_{|z|=\frac{1}{\epsilon_{1}}}\frac{(1+z)^{4k-j+1}z}{(1+2z)^{r+j+1}}\underbrace{\frac{1}{2\pi i}\int_{|w|=\widetilde{\gamma}_{1}}\frac{(1+w)^{r+j}}{w^{k+1}}\frac{1}{(4z(1+z)-w)^{k+1}}dw}_{\mathrm{I}}dz,
\end{multline}
where $|w|=\widetilde{\gamma}_{1}$ and $|z|=\frac{1}{\epsilon_{1}}$ are positively and negatively oriented, respectively.

We first evaluate the inner integral $\mathrm{I}$ using Cauchy residue theorem. Note that   $\widetilde{\gamma}_1=|w|< |4z(1+z)|$, thus $w=0$ lies inside the contour and $w=4z(1+z)$ lies outside the contour. By Cauchy residue theorem, we get
\begin{align}\label{wint:S1}
\mathrm{I}&=\frac{1}{2\pi i(4z(1+z))^{k+1}}    \int_{|w|=\widetilde{\gamma}_{1}}\frac{1}{w^{k+1}}\sum_{l=0}^{r+j}\sum_{p=0}^{\infty}\binom{r+j}{l}\binom{k+p}{p}\frac{w^{l+p}}{(4z(1+z))^{p}}dw\nonumber\\
&=\frac{1}{(4z(1+z))^{k+1}}\sum_{l=0}^{r+j}\binom{r+j}{l}\binom{2k-l}{k-l}\frac{1}{(4z(1+z))^{k-l}}\nonumber\\
&=\sum_{l=0}^{r+j}\binom{r+j}{l}\binom{2k-l}{k-l}\frac{1}{(4z(1+z))^{2k-l+1}}.
\end{align}
Substituting the above expression into \eqref{Int1}, we get
\begin{align}\label{INT3}
 C_{k,j,r} &=(-1)^{r+1}\frac{k!2^{4k+3}}{j!2\pi i}\sum_{l=0}^{r+j}\binom{r+j}{l}\binom{2k-l}{k-l}\int_{|z|=\frac{1}{\epsilon_{1}}}\frac{(1+z)^{4k-j+1}z}{(1+2z)^{r+j+1}(4z(1+z))^{2k-l+1}}dz\nonumber\\
 &=(-1)^{r+1}\frac{k!2^{4k+3}}{j!4^{2k+1}}\sum_{l=0}^{r+j}4^{l}\binom{r+j}{l}\binom{2k-l}{k-l}\underbrace{\frac{1}{2\pi i}\int_{|z|=\frac{1}{\epsilon_{1}}}\frac{(1+z)^{2k+l-j}}{(1+2z)^{r+j+1}z^{2k-l}}dz}_{\mathrm{II}},
\end{align}
where $|z|=\frac{1}{\epsilon_1}$ is a negatively oriented contour, and $\frac{1}{\epsilon_1}>\frac{3}{2}$. Therefore, $z=\infty$ is the only pole inside the domain, and hence
\begin{align*}
\mathrm{II}=\mathrm{Res}(\mathrm{Integrand};z=\infty)
&=\mathrm{Res}\left(-\frac{(1+z)^{2k+l-j}z^{2k-l+r+j+1-2k-l+j-2}}{(2+z)^{r+j+1}};z=0\right)\\
&=\mathrm{Res}\left(-\frac{(1+z)^{2k+l-j}z^{r+2j-2l-1}}{(2+z)^{r+j+1}};z=0\right)\\
&=-\sum_{p=0}^{2k+l-j}\binom{2k+l-j}{p}\binom{2l-j-p}{2l-2j-r-p}\frac{1}{2^{{2l-j-p+1}}}.
\end{align*}
Substituting this into \eqref{INT3}, we get
\begin{align*}
   C_{k,j,r} &=(-1)^{r}\frac{k!2^{4k+3}}{j!4^{2k+1}}\sum_{l=0}^{r+j}4^{l}\binom{r+j}{l}\binom{2k-l}{k-l}\sum_{p=0}^{2k+l-j}\binom{2k+l-j}{p}\binom{2l-j-p}{2l-2j-r-p}\frac{1}{2^{{2l-j-p+1}}}\\
   &=(-1)^{r}\frac{k!2^{j}}{j!}\sum_{l=0}^{r+j}\sum_{p=0}^{2k+l-j}2^{p}\binom{r+j}{l}\binom{2k-l}{k-l}\binom{2k+l-j}{p}\binom{2l-j-p}{2l-2j-r-p}.
\end{align*}
This completes the proof.
\end{proof}
The following lemma is crucially used in the proof of Proposition \ref{prop:thm2}.
\begin{lemma}\label{Bkjr:lemma}
   For $j,k\in\mathbb{N}$ such that $1\leq j\leq k$ and $r\in\mathbb{N}_{0}$ satisfies $0\leq r\leq 2k-2j$, the expression of $B_{k,j,r}$ in \eqref{Bkj:expres} simplifies to $C_{k,j,r}$ as given in \eqref{Ckjr:lemma:exp}.
\end{lemma}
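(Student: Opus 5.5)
We have to show that
\[
B_{k,j,r}=\sum_{l=0}^{k-j}\frac{(-1)^{r}k!2^j}{(2k-2l-j)(j-1)!}\binom{k+l}{l}\binom{2k-2l-j}{k-l}\binom{2l}{r+j-k+l}
\]
equals the double sum in \eqref{Ckjr:lemma:exp}. We will follow the proof of Lemma \ref{ckjr:lemma} in reverse and run everything through the Egorychev method. The aim is to write $B_{k,j,r}$ as a single contour integral in one variable $z$ and show that it agrees with the integral representation \eqref{INT3} of $C_{k,j,r}$. We obtained \eqref{INT3} in the proof of Lemma \ref{ckjr:lemma}, before the final residue-at-infinity expansion, so it is natural to match the two at that stage.

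\emph{Step 1: simplify the prefactor.} We first remove the awkward factor $\frac{1}{2k-2l-j}$ using
\[
\frac{1}{2k-2l-j}\binom{2k-2l-j}{k-l}=\frac{1}{k-l}\binom{2k-2l-j-1}{k-l-1}.
\]
We also note that $\frac{j}{k-l}$-type factors can be absorbed through $\binom{a}{b}-\binom{a-1}{b}=\binom{a-1}{b-1}$. Alternatively, we can write $\frac{k!}{(j-1)!}=\frac{k!}{j!}\,j$ and use $j\binom{2k-2l-j}{k-l}/(2k-2l-j)=\binom{2k-2l-j}{k-l}-2\binom{2k-2l-j-1}{k-l-1}$, splitting $B_{k,j,r}$ into two sums each of pure binomial type. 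After either route, $(-1)^r\frac{j!}{k!2^j}B_{k,j,r}$ becomes a finite linear combination of sums of the form $\sum_l \binom{k+l}{l}\binom{N-2l}{k-l}\binom{2l}{r+j-k+l}$.

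\emph{Step 2: contour representation.} For each of these sums we use the representations
\[
\binom{2l}{r+j-k+l}=\frac{1}{2\pi i}\oint\frac{(1+w)^{2l}}{w^{r+j-k+l+1}}\,\D w,\qquad
\binom{N-2l}{k-l}=\frac{1}{2\pi i}\oint\frac{(1+z)^{N-2l}}{z^{k-l+1}}\,\D z.
\]
Using the vanishing of the binomials, we extend the sum in $l$ to all $l\geq 0$. The sum over $l$ is then the generating series $\sum_l\binom{k+l}{l}x^l=(1-x)^{-k-1}$ with $x=\frac{z(1+w)^2}{w(1+z)^2}$, valid for suitably chosen radii. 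The $w$-integral then has poles only at $w=0$ and at the roots of $w(1+z)^2-z(1+w)^2=(w-z)(1-zw)$. We evaluate it by taking the residue at the single root $w=z$ (or $w=1/z$) lying inside the contour, and this leaves one $z$-integral.

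\emph{Step 3: matching.} We aim to transform the resulting $z$-integral, by a substitution such as $z\mapsto z/(1+z)$ or $z\mapsto 1/z$, into
\[
\mathrm{II}_l=\frac{1}{2\pi i}\oint\frac{(1+z)^{2k+l-j}}{(1+2z)^{r+j+1}z^{2k-l}}\,\D z,
\]
weighted by $4^l\binom{r+j}{l}\binom{2k-l}{k-l}$ as in \eqref{INT3}. The residue computation of $\mathrm{II}$ already carried out in Lemma \ref{ckjr:lemma} then gives the claimed formula. If the direct matching is messy, we can instead expand $\binom{2l}{r+j-k+l}$ via Vandermonde as $\sum_s\binom{l}{s}\binom{l}{r+j-k+l-s}$, which brings out a factor of the form $\binom{r+j}{\cdot}$. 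We also expect the factor $(1+2z)^{-(r+j+1)}$ to come from a root of the form $1+2z$ arising after $w=z$ is substituted into $(1+w)$-powers combined with the $k-l$ shift.

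\emph{Main obstacle.} We expect the main difficulty to be Step 3: choosing radii so that all the geometric series converge, identifying which pole of the $w$-integral lies inside the contour, and finding the change of variables that produces the factor $(1+2z)^{-(r+j+1)}$ together with the binomial $\binom{r+j}{l}$. As a safety check, we will verify the identity for small cases, for example $k=1,j=1,r=0$ and $k=2$ with all admissible $j,r$, before committing to the algebra. If the contour matching stalls, the fallback is a Zeilberger/WZ-style recurrence in $r$ satisfied by both sides, together with agreement at $r=0$.
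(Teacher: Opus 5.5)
\textbf{Main gap: the matching step is missing.} Your Steps 1--2 set up the same Egorychev machinery as the paper: contour representations, the series $\sum_l\binom{k+l}{l}x^l=(1-x)^{-k-1}$, and the factorization $(w-z)(1-zw)$. But the step that actually proves the lemma is turning the double integral for $B_{k,j,r}$ into the representation \eqref{Int1} of $C_{k,j,r}$. You leave exactly this open as the ``main obstacle'', and the route you sketch does not lead there.

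After the $l$-summation, the $w$-dependence is $w^{2k-r-j}[(w-z)(1-zw)]^{-k-1}$. Since $2k-r-j\ge j\ge 1$, $w=0$ is not a pole. The point $w=z$ is a pole of order $k+1$, so its residue is a $k$-th derivative. That gives a new finite sum, not the weights $4^{l}\binom{r+j}{l}\binom{2k-l}{k-l}$ of \eqref{INT3}. The index $l$ in \eqref{INT3} is not your summation index; it is created by the inner residue $\mathrm{I}$, so term-by-term matching with $\mathrm{II}_l$ is not available.

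The missing idea is to match before any residue is taken. Substitute $z\mapsto 1/z$, then $z\mapsto 2z+1$, and then make the M\"obius change $w=\frac{1+2z}{1+\tilde w}$ in the inner integral. Since $(2z+1)^2-1=4z(1+z)$, this does three things:
\begin{enumerate}
\item it sends the poles $w=2z+1$ and $w=(2z+1)^{-1}$ to $\tilde w=0$ and $\tilde w=4z(1+z)$;
\item it converts the $w$-integrand into exactly $\frac{(1+\tilde w)^{r+j}}{\tilde w^{k+1}(4z(1+z)-\tilde w)^{k+1}}$;
\item together with the Jacobian, the power of $w$ yields the factor $(1+2z)^{-(r+j+1)}$.
\end{enumerate}
The result is the integrand of \eqref{Int1} on new, negatively oriented contours. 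After checking which poles these contours separate, the residue computations of Lemma~\ref{ckjr:lemma} go through unchanged.

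Without this step the proposal is a plan rather than a proof. The Vandermonde and WZ fallbacks are not worked out either. The target $C_{k,j,r}$ is a double sum, so a recurrence for it needs multisum creative telescoping, and a recurrence of order greater than one needs more initial data than $r=0$.

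\textbf{An earlier error that breaks the computation.} First, your second identity has the wrong sign. Since $(2k-2l-j)-2(k-l)=-j$, the correct form is $\frac{j}{2k-2l-j}\binom{2k-2l-j}{k-l}=2\binom{2k-2l-j-1}{k-l-1}-\binom{2k-2l-j}{k-l}$.

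More seriously, with lower index $k-l$ you cannot extend the $l$-sum to infinity. For $l>k-j/2$ the upper index $2k-2l-j$ is negative, and the contour representation $\frac{1}{2\pi i}\oint(1+z)^{N-2l}z^{-(k-l+1)}\,\D z$ need not vanish for $k-j<l\le k$. Take $k=j=1$, $r=0$. In the contour sense $\binom{-2}{-1}=0$ and $\binom{-1}{0}=1$, so the spurious term $l=1$ equals $\binom{2}{1}\binom{2}{1}\left(2\binom{-2}{-1}-\binom{-1}{0}\right)=-4$. This turns the correct value $1$ of $(-1)^r\frac{j!}{k!2^j}B_{k,j,r}$ into $-3$.

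The fix, which is what the paper does, is to use the symmetry $\binom{2k-2l-j}{k-l}=\binom{2k-2l-j}{k-j-l}$ first; it is valid for $0\le l\le k-j$. Then write $\frac{j}{2k-2l-j}\binom{2k-2l-j}{k-j-l}=\binom{2k-2l-j-1}{k-j-l}-\binom{2k-2l-j-1}{k-j-l-1}=\frac{1}{2\pi i}\oint\frac{(1+z)^{2k-2l-j-1}(1-z)}{z^{k-j-l+1}}\,\D z$. Its integrand is holomorphic at $z=0$ once $l>k-j$, and that is what makes the extension to all $l\ge 0$ legitimate.
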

\begin{proof}
    Consider \begin{equation}
    B_{k,j,r}=\sum_{l=0}^{k-j}\frac{(-1)^{r}k!2^j}{(2k-2l-j)(j-1)!}\binom{k+l}{l}\binom{2k-2l-j}{k-l}\binom{2l}{r+j-k+l}.
    \end{equation}
     We rewrite this as
\begin{align}\label{Bkjr:first:exp}
   B_{k,j,r}
    &=(-1)^{r}\frac{k!2^{j}}{j!}\sum_{l=0}^{k-j}\frac{j}{(2k-2l-j)}\binom{k+l}{l}\binom{2k-2l-j}{k-j-l}\binom{2l}{r+j-k+l}.
\end{align}
Note that the binomial $\binom{2k-2l-j}{k-j-l}$ vanishes for $l>k-j$.
Therefore, we can extend the sum to infinity.
Further, using the relation $\frac{n-2k}{n}\binom{n}{k}=\binom{n-1}{k}-\binom{n-1}{k-1}$, we have
\begin{equation}
    \frac{j}{(2k-2l-j)}\binom{2k-2l-j}{k-j-l}=\binom{2k-2l-j-1}{k-j-l}-\binom{2k-2l-j-1}{k-j-l-1}.
\end{equation}
Thus, we have
\begin{align*}
  &   (-1)^{r}\frac{j!}{k!2^{j}}B_{k,j,r}\\&=\sum_{l=0}^{\infty}\binom{k+l}{l}\binom{2l}{r+j-k+l}\left[\binom{2k-2l-j-1}{k-j-l}-\binom{2k-2l-j-1}{k-j-l-1}\right]\\
&=\frac{1}{(2\pi i)^{2}}\sum_{l=0}^{\infty}\binom{k+l}{l}\int_{|w|=\gamma_{2}}\frac{(1+w)^{2l}}{w^{r+j-k+l+1}}\int_{|z|=\epsilon_{2}}\frac{(1+z)^{2k-2l-j-1}(1-z)}{z^{k-j-l+1}}dzdw\\
&=\frac{1}{(2\pi i)^{2}}\int_{|w|=\gamma_{2}}\frac{1}{w^{r+j-k+1}}\int_{|z|=\epsilon_{2}}\frac{(1+z)^{2k-j-1}(1-z)}{z^{k-j+1}}\sum_{l=0}^{\infty}\binom{k+l}{l}\left(\frac{z(1+w)^2}{w(1+z)^2}\right)^{l}dzdw.
\end{align*}
If we choose $\epsilon_{2}<\gamma_{2}<1$ such that $\left|\frac{z(1+w)^2}{w(1+z)^2}\right|<1$, then
 the series $\sum\limits_{l=0}^{\infty}\binom{k+l}{l}\left(\frac{z(1+w)^2}{w(1+z)^2}\right)^{l}$ converges, and the above sum takes the form
\begin{align*}
         (-1)^{r}\frac{j!}{k!2^{j}}B_{k,j,r}&=\frac{1}{(2\pi i)^{2}}\int_{|w|=\gamma_{2}}\frac{1}{w^{r+j-k+1}}\int_{|z|=\epsilon_{2}}\frac{(1+z)^{2k-j-1}(1-z)}{z^{k-j+1}}\frac{1}{\left(1-\frac{z(1+w)^2}{w(1+z)^2}\right)^{k+1}}dzdw.
\end{align*}
Replacing $z$ with $\frac{1}{z}$, we get
\begin{align*}
         (-1)^{r}\frac{j!}{k!2^{j}}B_{k,j,r}&=-\frac{1}{(2\pi i)^{2}}\int_{|w|=\gamma_{2}}\frac{1}{w^{r+j-k+1}}\int_{|z|=\frac{1}{\epsilon_{2}}}\frac{(1+z)^{2k-j-1}(z-1)}{z^{2k-j-1-k+j-1+1}}\frac{1}{\left(1-\frac{z(1+w)^2}{w(1+z)^2}\right)^{k+1}}\frac{1}{z^2}dzdw\\
&=-\frac{1}{(2\pi i)^{2}}\int_{|w|=\gamma_{2}}\frac{1}{w^{r+j-2k}}\int_{|z|=\frac{1}{\epsilon_{2}}}\frac{(1+z)^{4k-j+1}(z-1)}{z^{k+1}}\frac{1}{[(w-z)(1-wz)]^{k+1}}dzdw,
\end{align*}
where $|w|=\gamma_{2}$ and $|z|=\frac{1}{\epsilon_{2}}$ are positively and negatively oriented contours, respectively.
If we replace $z$ with $2z+1$, then  we get
\begin{multline}
     B_{k,j,r}\\
=(-1)^{r+1}\frac{k!2^{4k+3}}{j!(2\pi i)^{2}}\int_{|2z+1|=\frac{1}{\epsilon_{2}}}\frac{(1+z)^{4k-j+1}z}{(2z+1)^{k+1}}\int_{|w|=\gamma_{2}}\frac{1}{w^{r+j-2k}}\frac{1}{[(w-2z-1)(1-w(2z+1))]^{k+1}}dzdw.
\end{multline}
Making the change of variable $w=\frac{1+2z}{1+\tilde{w}}$, and rewriting $\tilde{w}$ as $w$, we get
\begin{multline}\label{Bkjr:int}
     B_{k,j,r}\\=(-1)^{r}\frac{k!2^{4k+3}}{j!2\pi i}\int_{|2z+1|=\frac{1}{\epsilon_{2}}}\frac{(1+z)^{4k-j+1}z}{(2z+1)^{r+j+1}}\underbrace{\frac{1}{2\pi i}\int_{\left|\frac{1+w}{1+2z}\right|=\frac{1}{\gamma_{2}}}\frac{(1+w)^{r+j}}{w^{k+1}}\frac{1}{[\left(4z(z+1)-w\right)]^{k+1}}dw}_{\mathrm{I}}dz,
\end{multline}
where both the contours $\left|\frac{1+w}{1+2z}\right|=\frac{1}{\gamma_{2}}$ and $|2z+1|=\frac{1}{\epsilon_{2}}$ are negatively oriented.
We now evaluate the inner integral $\mathrm{I}$ using the Cauchy residue theorem.
Note that the contour for $w$ is $|1+w|=\frac{|2z+1|}{\gamma_{2}}=\frac{1}{\epsilon_{2}{\gamma_{2}}}>1$, since $\epsilon_{2}<\gamma_{2}<1$. Therefore, $w=0$ lies outside the contour and $w=4z(1+z)$ lies inside the contour.
Now using the Cauchy residue theorem, similar to \eqref{wint:S1}, we get
\begin{align*}
    \mathrm{I}&=-\mathrm{Res}(\mathrm{Integrand};w=0)=-\sum_{l=0}^{r+j}\binom{r+j}{l}\binom{2k-l}{k-l}\frac{1}{(4z(1+z))^{2k-l+1}}.
\end{align*}
Substituting this expression into \eqref{Bkjr:int}, we have
\begin{align*}
   B_{k,j,r}&=(-1)^{r+1}\frac{k!2^{4k+3}}{j!2\pi i}\sum_{l=0}^{r+j}\binom{r+j}{l}\binom{2k-l}{k-l}\int_{|2z+1|=\frac{1}{\epsilon_{2}}}\frac{(1+z)^{4k-j+1}z}{(2z+1)^{r+j+1}}\frac{1}{(4z(1+z))^{2k-l+1}}\\
   &=(-1)^{r+1}\frac{k!2^{4k+3}}{j!2\pi i4^{2k+1}}\sum_{l=0}^{r+j}4^{l}\binom{r+j}{l}\binom{2k-l}{k-l}\int_{|2z+1|=\frac{1}{\epsilon_{2}}}\frac{(1+z)^{2k+l-j}}{(1+2z)^{r+j+1}z^{2k-l}}dz,
\end{align*}
where $|2z+1|=\frac{1}{\epsilon_2}$ is a negatively oriented contour and $\frac{1}{\epsilon_2}>1$. Therefore, $z=\infty$ is the only pole inside the domain, and hence the inner integral gives
\begin{align*}
    \frac{1}{2\pi i}\int_{|2z+1|=\frac{1}{\epsilon_{2}}}\frac{(1+z)^{2k+l-j}}{(1+2z)^{r+j+1}z^{2k-l}}dz&=\mathrm{Res}(\mathrm{Integrand};z=\infty)\\
    &=-\sum_{p=0}^{2k+l-j}\binom{2k+l-j}{p}\binom{2l-j-p}{2l-2j-r-p}\frac{1}{2^{{2l-j-p+1}}}.
\end{align*}
Substituting this expression back into $B_{k,j,r}$, we finally get
\begin{align*}
      B_{k,j,r} 
   =(-1)^{r}\frac{k!2^{j}}{j!}\sum_{l=0}^{r+j}\sum_{p=0}^{2k+l-j}2^{p}\binom{r+j}{l}\binom{2k-l}{k-l}\binom{2k+l-j}{p}\binom{2l-j-p}{2l-2j-r-p}=C_{k,j,r}.
\end{align*}
This completes the proof of the lemma.
\end{proof}
\section*{Acknowledgements}
\noindent PC and NS acknowledge the support of the Department of Atomic Energy, Government of India, for the Ph.D. fellowship.\\
AT acknowledges the support of the Anusandhan National Research Foundation (ANRF), under the National Post-Doctoral Fellowship scheme, file no. PDF/2025/005843.\\
Finally, the authors would like to sincerely  thank Prof. Venkateswaran P. Krishnan for several helpful discussions and suggestions.
\bibliographystyle{plain}
\bibliography{references}
  \end{document}